\newcommand{\stkout}[1]{\ifmmode\text{\sout{\ensuremath{#1}}}\else\sout{#1}\fi}
\def\@url#1{{\tt\def~{\lower3.5pt\hbox{\char'176}}\def\_{\char'137}#1}}
\let\fullref\autoref
\def\makeautorefname#1#2{\expandafter\def\csname#1autorefname\endcsname{#2}}
                   \let\c@lemma\c@theorem
\newtheorem{thm}{Theorem}[section]
\newtheorem{theorem}{Theorem}[section]
\newtheorem{cor}{Corollary}[section]
\newtheorem{prop}{Proposition}[section]
\newtheorem{lem}{Lemma}[section]
\newtheorem{conj}{Conjecture}[section]
\theoremstyle{definition}
\newtheorem{rem}{Remark}[section]
\let\c@lem=\c@thm
\let\c@theorem=\c@thm
\let\c@notation=\c@thm
\let\c@cor=\c@thm
\let\c@prop=\c@thm
\let\c@lem=\c@thm
\let\c@defn=\c@thm
\let\c@exmps=\c@thm
\let\c@rem=\c@thm
\let\c@warn=\c@thm
\let\c@claim=\c@thm
\let\c@conj=\c@thm
\let\c@quest=\c@thm
\numberwithin{equation}{section}
\def\quickop#1{\expandafter\newcommand\csname #1\endcsname{\operatorname{#1}}}
\numberwithin{equation}{subsection}
\newcommand{\F}{\mathbb{F}}
\newcommand{\G}{\mathbb{G}}
\newcommand{\W}{\mathbb{W}}
\newcommand{\Q}{\mathbb{Q}}
\newcommand{\FF}{\mathbb{F}}
\DeclareFontFamily{OMS}{rsfs}{\skewchar\font'60}
\DeclareFontShape{OMS}{rsfs}{m}{n}{<-5>rsfs5 <5-7>rsfs7 <7->rsfs10 }{}
\DeclareSymbolFont{rsfs}{OMS}{rsfs}{m}{n}
\DeclareSymbolFontAlphabet{\scr}{rsfs}
\newcommand{\tr}{\alpha}
\def\makeop#1{\expandafter\def\csname #1\endcsname{\mathop{\mathrm{#1}}\nolimits}}
\def\FF{\mathbb{F}}
\def\GG{\mathbb{G}}
\def\SS{\mathbb{S}}
\def\WW{{{\mathbb{W}}}}
\def\Z{{{\mathbb{Z}}}}
\newcommand{\LR}{R}
\title[Orbits for the Lubin--Tate Ring]{Computations of Orbits for the Lubin--Tate Ring}
\date{\today}
\author[A. Beaudry]{Agn\`es Beaudry}
\address{Department of Mathematics\\ University of Colorado Boulder \\ \newline Campus Box 395 \\ Boulder \\ Colorado \\ 80309 \\ USA}
\author[N. Downey]{Naiche Downey}
\author[C. McCranie]{Connor McCranie}
\author[L. Meszar]{Luke Meszar}
\author[A. Riddle]{Andy Riddle}
\author[P. Rock]{Peter Rock}
\thanks{This material is based on work supported by the CU Boulder Department of Mathematics in the context of its internal Research For Undergraduates program. This material is also based upon work supported by the National Science Foundation under Grant No.~DMS-1725563.}
\begin{document}
\maketitle
\begin{abstract}
We take a direct approach to computing the orbits for the action of the automorphism group $\mathbb{G}_2$ of the Honda formal group law of height $2$ on the associated Lubin--Tate rings $R_2$. We prove that $(R_2/p)_{\mathbb{G}_2}  \cong \mathbb{F}_p$. The result is new for $p=2$ and $p=3$. For primes $p\geq 5$, the result is a consequence of computations of Shimomura and Yabe and has been reproduced by Kohlhaase using different methods.
\end{abstract}
\setcounter{tocdepth}{2}
\tableofcontents

\section{Introduction}
In this paper, we consider a direct approach to computing orbits for the action of the automorphism group of the Honda formal group law of height $2$ on the reduction modulo $(p)$ of the associated Lubin--Tate ring. The results are new for $p=2$ and $p=3$ and they follow from the work of Shimomura and Yabe \cite{shimyab} if $p\geq 5$, also reproduced by Kohlhaase in \cite{Kohlhaase}. We also use this as an opportunity to highlight some of the results on the action of the automorphism group which appeared in French in the doctoral thesis of Lader \cite{lader}. See \fullref{sec:lader}.

These results are meant to lend weight to a conjecture, which for lack of a better name we will call the Chromatic Vanishing Conjecture. This conjecture plays a key role in the analysis of Hopkins' Chromatic Splitting Conjecture (as stated by Hovey in \cite{cschov}) at the prime $p=3$ in \cite{GoerssSplit} and at the prime $p=2$ in \cite{BGH}. See \fullref{rem:cscexp} below. The importance this statement plays at height $n=2$ was originally highlighted to the last author by Hans-Werner Henn. To state it, consider the Honda formal group law of height $n$ over $\F_{p^n}$. The associated Lubin--Tate ring $R_n$ satisfies 
$\LR_n \cong \WW[\![u_1, \ldots, u_{n-1}]\!]$
where $\WW$ are the Witt vectors on $\FF_{p^n}$. Let $\mathbb{H}_n$ be the Honda formal group law of height $n$ and $\mathbb{S}_n $ be the group of automorphisms of $\mathbb{H}_n$ over $\F_{p^n}$. Since $\mathbb{H}_n$ has coefficients in $\F_p$, the Galois group $\Gal(\F_{p^n}/\F_p)$ acts on $\mathbb{S}_n$. We  
let
$\GG_n$ be the extension of $\mathbb{S}_n$ by the Galois group. 
\begin{conj}[Chromatic Vanishing Conjecture]\label{conj:main}
Let $\W \to  \LR_n$ and $\FF_{p^n} \to \LR_n/p$ be the natural maps.
\begin{enumerate}[(1)]
\item (Integral) The continuous cohomology and homology of $\LR_n/\W$ vanish in all degrees so that
\begin{align*}
H^*(\G_n, \LR_n ) &\cong  H^*(\G_n, \W)  & H_*(\G_n, \LR_n) &\cong  H_*(\G_n, \W).
\end{align*}
\item (Reduced) The continuous cohomology and homology of $(\LR_n/p)/\F_{p^n}$ vanish in all degrees so that
\begin{align*}
H^*(\G_n, \LR_n/p ) &\cong  H^*(\G_n, \F_{p^n})  & H_*(\G_n, \LR_n/p) &\cong  H_*(\G_n, \F_{p^n})
\end{align*}
\end{enumerate}
\end{conj}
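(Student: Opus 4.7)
The statement is a conjecture, so any concrete plan can at best address the case the paper actually settles, namely the $H_0$ part of the Reduced version at height $n=2$: showing $(R_2/p)_{\G_2} \cong \FF_p$. Since $R_2/p \cong \FF_{p^2}[\![u_1]\!]$ and the invariants $(\FF_{p^2})_{\G_2} = \FF_p$ are handled immediately by the Galois action, it suffices to prove that the augmentation ideal $(u_1) \subset \FF_{p^2}[\![u_1]\!]$ lies in the closure of the $\ZZ$-span of $\{g\cdot x - x : g\in \G_2,\, x\in R_2/p\}$.

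The approach is direct orbit calculation. First I would fix explicit generators of $\G_2 = \SS_2 \rtimes \Gal(\FF_{p^2}/\FF_p)$. The group $\SS_2$ is the unit group of the maximal order in the quaternionic division algebra $D_{1/2}$, with a convenient uniformizer $S$ satisfying $S^2 = -p$ and the twist relation. Using the Hazewinkel or Honda presentation of the formal group, I would tabulate $g_\ast(u_1) \pmod{p}$ for a small list of elements: the scalars $\alpha \in \ZZ_p^\times \subset \SS_2$, the Teichm\"uller lifts of $\FF_{p^2}^\times$, a generator of the Galois action, and the uniformizer $S$ (or suitable compositions involving $S$).

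Next I would filter $R_2/p$ by the $u_1$-adic filtration and argue by induction on the leading exponent $k$. For each $k$, I need to exhibit a group element $g$ and element $x$ such that $g\cdot x - x$ has a specified unit leading term at $u_1^k$, so that the leading term of any given element can be killed. The action of $\alpha \in \ZZ_p^\times$ on $u_1$ by a power, combined with the Galois and Teichm\"uller actions, should provide enough $\FF_{p^2}$-linear control; the role of $S$ is to shift between different powers of $u_1$, since conjugation by $S$ effectively raises the $u_1$-valuation in a controlled way. The convergence in the $u_1$-adic topology then assembles the inductive step into a closed-form surjection $\bigoplus (g-1) \twoheadrightarrow (u_1)$.

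The main obstacle will be the primes $p=2$ and $p=3$. For $p\ge 5$ the scalar action of $\ZZ_p^\times$ already produces a unit $\alpha^{p-1}-1$ in front of $u_1^k$ for most $k$, and the remaining residue classes of $k$ are handled by a single twist involving $S$. At $p=3$ fewer exponents are accessible by scalars, and at $p=2$ the element $\alpha^{p-1}-1$ vanishes outright, so one has to produce the required unit coefficients from a more subtle combination of the $S$-action with the Galois action, and one must carefully keep track of higher-order corrections in the formulas for $g_\ast(u_1)$ that are normally negligible. This is precisely where a careful treatment of the Lader-style formulas for the action, highlighted in the paper, becomes essential.
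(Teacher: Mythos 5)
The statement you are asked about is a conjecture, so the only part of it the paper actually settles is the $H_0$ piece of item (2) at height $n=2$, and your sketch addresses exactly that; so I will compare against the paper's proof of $(R_2/p)_{\G_2}\cong\FF_p$. Your high-level scheme --- filter $\FF_{p^2}[\![u_1]\!]$ by $u_1$-adic valuation, kill each graded class $[u_1^n]$ in the coinvariants by producing group elements with unit leading coefficients, and finish by taking Galois coinvariants --- is the paper's approach. But two points in the sketch would cause the argument to fail as written.

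First, the elements that kill $[u_1^n]$ for generic $n$ are not the central scalars $\alpha\in\Z_p^\times$: such $\alpha$ have $g_0\in\FF_p^\times$, so $t_0^{p-1}\equiv 1$ to leading order and they act trivially on $u_1$ modulo higher terms. The correct elements are the Teichm\"uller lifts $\zeta\in\W^\times$ of $\FF_{p^2}^\times$, which act \emph{exactly} by $\zeta_*(u_1)=\zeta^{p-1}u_1$, giving $(\zeta^{n(p-1)}-1)[u_1^n]=0$ with $\zeta^{n(p-1)}-1$ a unit precisely when $p+1\nmid n$. This works identically at every prime; there is no "$\alpha^{p-1}-1$ vanishes outright at $p=2$'' degeneration, and the actual difficulty at $p=2,3$ is in the non-scalar part of the argument, not here.

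Second, treating the residual exponents $n=(p+1)\tr$ as handled by "a single twist involving $S$'' for $p\geq5$ is not correct. The element $g=1+S$ gives $g_*(u_1^{n-1})\equiv u_1^{n-1}+(p-1)(n-1)u_1^n$ to leading order, and this only produces a unit coefficient when $\tr\not\equiv1\pmod p$. When $\tr\equiv1\pmod p$ --- so the $p$-adic expansion of $\tr$ begins with a run of $1$'s --- one needs a genuine recursive descent: write $k=(p+1)(1+p+\cdots+p^{\ell-1}+p^\ell\eta)$, set up a chain $k_0>k_1>\cdots>k_{\ell-1}$ by peeling off the initial $1$'s, and show $[u_1^{k_r}]=\pm[u_1^{k_{r+1}}]$ for each step and $[u_1^{k_{\ell-1}}]=0$ at the end. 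Each step uses the expansion of $t_0$ modulo $(p,u_1^{2p+1})$ from the Lader formulas together with careful bookkeeping of binomial coefficients mod $p$, and the base case is exactly where the condition $\eta\not\equiv1\pmod p$ pays off. This recursion is the technical core of the proof at \emph{all} primes, not only $p=2,3$, and your sketch does not account for why the induction terminates in those cases.

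Finally, observe that the paper establishes the reduction $[u_1^{k-1}]=0$ in $(\FF_{p^2}[u_1]/(u_1^k))_{\SS}$ for each finite $k$ and then uses right-exactness of coinvariants to show the inverse system is constant. You will want to make that inverse-limit bookkeeping explicit rather than appeal informally to "$u_1$-adic convergence,'' since coinvariants of a profinite module are defined as an inverse limit and there is an implicit $\varprojlim^1$ issue to dismiss.
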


When $p\gg n$, the groups $\G_n$ are oriented Poincar\'e duality groups and the statements for cohomology and homology are equivalent. Further, the reduced conjectures imply their integral versions. Indeed, using the five lemma, (2) implies the vanishing of the continuous cohomology and homology with coefficients in $(\LR_n/p^k)/(\W/p^k)$ for all $k\geq 1$. A ${\lim}^1$ exact sequence then gives the desired implication.

The conjecture is a tautology at height $n=1$. At height $n=2$, the statements about cohomology are known to hold for all primes. They are due to Shimomura--Yabe if $p\geq 5$ \cite{shimyab}, to Henn--Karamanov--Mahowald and Goerss--Henn--Mahowald--Rezk for $p=3$  \cite{hkm, GoerssSplit} and to Beaudry--Goerss--Henn for $p=2$ \cite{Paper2, BGH}. Kohlhaase has reproduced the results for $p\geq 5$ in \cite[Theorem 3.20]{Kohlhaase} using different methods. For $p\geq 5$, Poincar\'e duality then gives the homological results. Finally, that $H^0(\GG_n, R_n) \cong H^0(\GG_n, \W) \cong \Z_p$ at all heights and primes is a folklore result of Hopkins. See \cite[Lemma 1.33]{BobkovaGoerss}.

For $p=2$ and $p=3$, similar methods to those used to prove the cohomological results should give a proof of the conjecture for homology. As in the cohomological cases, this would probably be a tedious computation. However, in this paper, we prove the homological result modulo $(p)$ in degree zero via a direct argument for all primes, including $p=2$ and $p=3$. Our main theorem is:
\begin{thm}
Let $p$ be any prime. The natural map $ \F_{p^2} \to R_2/p$ induces an isomorphism
\[H_0(\GG_2, R_2/p) \cong H_0(\GG_2, \F_{p^2}).\]
\end{thm}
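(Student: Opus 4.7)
The plan is to prove the natural map gives an isomorphism by separately arguing injectivity and surjectivity on $H_0$. Injectivity is immediate: the augmentation $\epsilon\co R_2/p \to \F_{p^2}$ sending $u_1 \mapsto 0$ is $\GG_2$-equivariant, since $\mathbb{S}_2$ acts on $R_2$ by ring endomorphisms preserving the maximal ideal, and the Galois factor acts coefficientwise on both $R_2$ and $\F_{p^2}$ by Frobenius. Thus $\epsilon$ descends to a retraction, on coinvariants, of the map induced by $\F_{p^2} \hookrightarrow R_2/p$, which gives injectivity.

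For surjectivity, filter $R_2/p = \F_{p^2}[\![u_1]\!]$ by powers of $(u_1)$, noting that this filtration is complete and separated. By successive approximation, it suffices to show that each monomial $a u_1^k$ with $a \in \F_{p^2}$ and $k \geq 1$ lies, modulo $(u_1)^{k+1}$, in the subgroup $I = \sum_{g \in \GG_2}(g-1)(R_2/p)$. The Teichm\"uller lift $[\omega] \in \mathbb{S}_2$ of a generator $\omega$ of $\F_{p^2}^\times$ acts on $R_2$ by $[\omega] \cdot u_1 \equiv \omega^{1-p} u_1 \pmod{u_1^2}$, so
\[
([\omega]-1) u_1^k \equiv \bigl(\omega^{k(1-p)}-1\bigr)\, u_1^k \pmod{(u_1)^{k+1}},
\]
and the coefficient is a unit in $\F_{p^2}$ precisely when $(p+1) \nmid k$. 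This handles all such degrees in one stroke.

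The main obstacle is the ``bad'' case $(p+1) \mid k$, where the Teichm\"uller action on the leading term is trivial and one must turn to non-central elements of $\mathbb{S}_2$. The plan is to work with elements of the form $1 + aS$, where $S$ is a uniformizer of the maximal order in the height-$2$ division algebra acting on $R_2$, and to compute the leading effect on $u_1^k$ modulo $p$ using the explicit Honda formal group law. One must verify that for every bad $k$ there is a choice of $a$ making this leading effect a unit multiple of $u_1^k$, possibly after combining with a Teichm\"uller adjustment to absorb higher-order corrections arising from $[\omega] S = \omega^p S [\omega]$. The primes $p=2$ and $p=3$ demand the most care, since additional torsion in $\mathbb{S}_2$ at these primes can conspire against naive choices, forcing the lift and the order of expansion to be chosen with precision. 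Once each bad $k$ is settled, completeness of the $u_1$-adic filtration assembles the pieces and yields surjectivity, completing the theorem.
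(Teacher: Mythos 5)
Your reduction to showing $[u_1^k]=0$ for all $k\geq 1$ in the coinvariants, using completeness of the $u_1$-adic filtration, is the right framework and matches the paper's strategy; so is the Teichm\"uller argument for $(p+1)\nmid k$, and the retraction argument for injectivity is a fine shortcut. The gap is in the ``bad case'' $(p+1)\mid k$, and it is not a matter of care: the proposed mechanism cannot work for an infinite family of bad $k$.

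If $g = 1 + aS$, then $g_*(u_1) = t_0^{p-1}u_1$ with $t_0 \equiv 1 + a^p u_1 + \cdots \pmod{(p,u_1^2)}$, so
\[
(g-1)\bigl(u_1^{k-1}\bigr) \equiv (p-1)(k-1)\,a^p\, u_1^{k} \pmod{(p,u_1^{k+1})}.
\]
Writing $k = (p+1)\alpha$, the coefficient $(p-1)(k-1)$ is a unit mod $p$ iff $k\not\equiv 1 \pmod p$, i.e.\ iff $\alpha \not\equiv 1 \pmod p$. When $\alpha \equiv 1 \pmod p$ the factor $(k-1)$ vanishes mod $p$ and no choice of $a$ (nor any Teichm\"uller adjustment, which only rescales by units) can rescue the leading term; it is identically zero. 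This is exactly the subfamily the paper spends Propositions 2.10--2.14 on, and the resolution there is structurally different from what you propose: one writes $\alpha$ $p$-adically as $1 + p + \cdots + p^{\ell-1} + p^\ell\eta$ with $\eta\not\equiv 1 \pmod p$, applies $g=1+S$ not to $u_1^{k-1}$ but to $u_1^{k_{r+1}-p^r}$ for a descending sequence $k_r$, and extracts (after killing most terms via the Teichm\"uller relation $[u_1^n]=0$ for $(p+1)\nmid n$) a chain of identities $[u_1^{k_r}]=\pm[u_1^{k_{r+1}}]$, terminating in a base case $[u_1^{k_{\ell-1}}]=0$ that is attacked with a separate computation exploiting $\eta\not\equiv 1$. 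At $p=2$ one additionally needs a longer expansion of $t_0$ (to order $u_1^9$) and a different generator $g = 1 + \zeta S^2 + \zeta S^4$ for the chain step. None of this telescoping/descent idea appears in your proposal, so as written the argument fails on every $k$ with $k\equiv 1\pmod p$ and $(p+1)\mid k$ (e.g.\ $k = p^2+p+2$ when that is divisible by $p+1$, and more generally infinitely many $k$). The minor discrepancy $\omega^{1-p}$ vs.\ $\omega^{p-1}$ is harmless.
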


\begin{rem}\label{rem:cscexp}
We briefly explain the relationship of \fullref{conj:main} with the Chromatic Splitting Conjecture (CSC) as discussed in Section 4 of \cite{cschov}. Let $K(n)$ be the Morava $K$-theory spectrum and $E_n =E( \F_{p^n}, \mathbb{H}_n)$ be the Lubin-Tate spectrum, so that $(E_n)_0 \cong R_n$. By the Goerss--Hopkins--Miller Theorem \cite{goersshopkins}, the group $\GG_n$ acts on $E_n$ by maps of $\mathcal{E}_{\infty}$ ring spectra and a well-known result of Devinatz and Hopkins states that $L_{K(n)}S^0 \simeq E_n^{h\GG_n} $ \cite{DH}. Further, the $K(n)$-local $E_n$-based Adams-Novikov Spectral Sequence can be identified with the homotopy fixed point spectral sequence
\[E_2^{s,t} = H^s(\GG_n, (E_n)_t) \Longrightarrow \pi_{t-s}E_n^{h\GG_n} \cong \pi_{t-s}L_{K(n)}S^0 . \]

The CSC predicts that the chromatic reassembly process is governed by elements of $\pi_{*}L_{K(n)}S^0$ which are detected in $E_2^{*,0} \cong H^*(\GG_n, R_n)$ by classes in the image of the map from $H^*(\GG_n, \W)$. Based on a computation of Lazard and Morava \cite[Remark 2.2.5]{morava_noe}, the cohomological version of \fullref{conj:main} would immediately imply that the CSC holds rationally. Integrally, it would at the very least imply that the reassembly classes are present on the $E_2$--page. At large primes where the spectral sequence collapses, these classes would then exist in homotopy. Proving the cohomological version of \fullref{conj:main} is among the hardest computations in both \cite{GoerssSplit} and \cite{BGH}.
\end{rem}

At this time, a computational proof of the Chromatic Vanishing Conjecture at higher heights seems out of reach. One could hope for a computational proof in homological degree zero at general heights. However, the precision of the information on the action of $\GG_2$ needed to carry out our direct argument suggests that even in this case, a computational proof may not be feasible. Further, if it is true in general, it should not be a computational accident and there ought to be a compelling conceptual explanation.

\ \\
\noindent
\textbf{Organization of the paper.} In \fullref{sec:res}, we give the proof of the main result. In \fullref{sec:lader}, we review the formulas for the action of $\GG_2$ needed for the computations. 

\ \\
\noindent
\textbf{Acknowledgements.} We thank some of the usual suspects for useful conversations: Tobias Barthel, Mark Behrens, Paul Goerss, Hans-Werner Henn, Mike Hopkins, Niko Naumann and Vesna Stojanoska. We also thank the referee and the editors for their input.

\section{Orbits modulo $(p)$}\label{sec:res}
In this section, we prove our main result which is a direct computation of the orbits for the action of $\GG_2$ at height $2$.

\subsection{Background and Results}
We begin by recalling a few facts in order to state our results. We refer the reader to Hazewinkel \cite{MR506881} for more background on formal group laws.

We let $\mathbb{H}_2$ be the Honda formal group law of height $2$. 
The $p$-series of $\mathbb{H}_2$ has the form
\[[p]_{\mathbb{H}_2}(x) = x^{p^2}.\]
The coefficients of $\mathbb{H}_2$ are in $\FF_p$. We let $ \mathcal{O}_2$ be the endomorphism ring of $\mathbb{H}_2$ over $\FF_{p^2}$. Then $\mathcal{O}_2$ is a module over the $p$-adic integers $\Z_p$, generated by the automorphisms
\begin{align*}
[1](x) &= x & S(x) &= x^{p} & \zeta (x) &= \zeta x  
\end{align*}
where $\zeta \in \FF_{p^2}$ is a primitive $p^2-1$th root of unity. In fact, letting $\W = \Z_p(\zeta)$ be the ring of integers of the unramified field extension $\Q_p(\zeta)$ of degree $2$ over $\Q_p$, an explicit presentation of $\mathcal{O}_2$ is given by 
\[\mathcal{O}_2 \cong \W\langle S \rangle / (S^2 = p, Sa = a^{\sigma}S)\]
where $a \in \W$ and $\sigma$ is the Frobenius automorphism in 
\[\Gal = \Gal(\Q_p(\zeta)/\Q_p ) \cong \Z/2.\] 
The group of automorphisms of $\mathbb{H}_2$ is $\SS_2 = \mathcal{O}_2^{\times}$. Since $\Gal$ acts on $\mathcal{O}_2$ via its natural action on $\W$ (and fixing $S$), we can define
\[\GG_2 = \SS_2 \rtimes \Gal.\]

Now, we turn to the description of the Lubin--Tate ring $R_2$. See Lubin--Tate \cite{lubintate} for more details. Let
$R_2 =  \WW[\![u_1]\!]$
and $F(x,y)= x+_F y $ be a deformation of $\mathbb{H}_2$ defined over $R_2$, chosen so that
\[  [p]_F(x) = px +_F u_1 x^p +_F x^{p^2}.\] It follows from Lubin--Tate theory that the deformations of $\mathbb{H}_2$ to complete local rings are co-represented by continuous homomorphisms from the ring $R_2$. The group $\SS_2$ naturally acts on $R_2$. The Galois group acts on $R_2$ via the action on $\W$, fixing $u_1$, and this extends the action of $\SS_2$ to an action of $\GG_2$.

To describe the action of $\SS_2$, note that any element $g \in \SS_2$ can be expressed uniquely as a power series 
\[g=\sum_{i=0}^{\infty} g_i S^i \]
where $g_i^{p^2}-g_i =0$. In other words, a coefficient $g_i$ is either zero or a Teichm\"uller lift of $\FF_{p^2}^{\times}$ in $\W^{\times}$. As we will see in \fullref{sec:lader} below,
\begin{equation}\label{eqn:actgen}
g_*(u_1) = t_0^{p-1}u_1 + t_0^{-1}t_1(p-p^p)
\end{equation}
for a unit $t_0$ in $\WW[\![u_1]\!]$ such that $t_0 = g_0$ modulo $(p,u_1)$ and an element $t_1 \in \WW[\![u_1]\!]$ such that $t_1 = g_1$ modulo $(p,u_1)$. If $g = \zeta$ is a primitive $p^2-1$th root of unity in $\WW^{\times} \subseteq \SS_2$, one can show that $t_0 = \zeta$ and $t_1=0$, so that 
\begin{equation}\label{eqn:actzeta}
\zeta_*(u_1) = \zeta^{p-1}u_1 .
\end{equation}
For more general elements $g \in \SS_2$, $t_0$ is tedious to compute and \fullref{sec:lader} is dedicated to this task.

The goal of this paper is to compute the orbits for the action of $\G_2$ on $R_2/p$, that is, the coinvariants $(R_2/p)_{\G_2}$. We recall the definition of the coinvariants for the action of a profinite group on a profinite module. Let $G = \varprojlim_i G/G_i$ for finite quotients $G/G_i$. Define
\[\Z_p[\![ G]\!] = \varprojlim_{i,j} \Z/p^j [G/G_i] \]
and $\FF_p[\![ G]\!]  = \Z_p[\![ G]\!] /(p)$.
Then, for any profinite module $M =  \varprojlim_k M_k$ where $M_k$ are finite discrete $\Z_p[\![ G]\!] $-modules, we have
\begin{equation*}
M_{G}  =  \varprojlim_{k,j} M_k\otimes_{\Z_p[\![G]\!]} \Z/p^j 
 \end{equation*}
 for the trivial action of $G$ on the right factor $\Z/p^j$. Note that if $M$ is an $\FF_p$-vector space, then
\[M_{G}  \cong  \varprojlim_k M_k \otimes_{\FF_p[\![G]\!]} \FF_p.\] 
When $G= \G_2$ or $\SS_2$, we can choose $G_i$ to be the subgroup consisting of those elements of $\SS_2$ which are congruent to $1$ modulo $(S^i)$. For
 $M = R_2/p$, we can choose $M_k $ to be the discrete finite module $R_2/(p, u_1^k)$ and we have
\begin{equation}\label{eqn:invsys}
(R_2/p)_{G} = \varprojlim_k R_2/(p,u_1^k) \otimes_{\FF_p[\![G]\!]} \FF_p. 
 \end{equation}

We now state the main result.
\begin{thm}\label{thm:mainmodp}
There is an isomorphism 
$(R_2/p)_{\G_2}  \cong \FF_p$ for all primes $p$.
\end{thm}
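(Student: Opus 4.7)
The plan is to use equation \eqref{eqn:invsys} to reduce, via right-exactness of coinvariants, to proving the finite statement $(R_2/(p, u_1^{k+1}))_{\G_2} \cong \FF_p$ for each $k \geq 0$; the transition maps are surjective so the inverse limit then yields the theorem. Fix $k$, set $V = R_2/(p, u_1^{k+1}) = \FF_{p^2}[u_1]/(u_1^{k+1})$, and let $N \subseteq V$ be the $\FF_p$-subspace generated by all $g_* v - v$ with $g \in \G_2$ and $v \in V$; the goal is to show $\dim_{\FF_p}(V/N) = 1$ with the class of $1$ as generator.

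The Galois element $\sigma$ contributes $(a - a^p) u_1^j \in N$ for every $a \in \FF_{p^2}$ and every $0 \leq j \leq k$; since $a \mapsto a - a^p$ on $\FF_{p^2}$ has $\FF_p$-image of dimension one, this reduces each homogeneous line to an $\FF_p$-line represented by $u_1^j$. For $j \geq 1$ with $(p+1) \nmid j$, equation \eqref{eqn:actzeta} gives $\zeta_*(u_1^j) - u_1^j = (\zeta^{j(p-1)} - 1) u_1^j$, which is a unit of $\FF_{p^2}$ times $u_1^j$ (since $\zeta^{p-1}$ has exact order $p+1$ in $\FF_{p^2}^\times$); combined with the corresponding relation for $\zeta u_1^j$, one concludes $\FF_{p^2} u_1^j \subseteq N$. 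The task reduces to killing $u_1^j$ for $j = m(p+1)$ in the range $1 \leq j \leq k$, which I handle by downward induction on $j$.

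For the inductive step, assume $u_1^\ell \in N$ for all $\ell$ with $j < \ell \leq k$. For any $g \in \SS_2$ with $g_0 = 1$, equation \eqref{eqn:actgen} reduces modulo $p$ to $g_*(u_1) = t_0^{p-1} u_1$, where $t_0 = 1 + c_1 u_1 + c_2 u_1^2 + \cdots$ has coefficients $c_i = c_i(g) \in \FF_{p^2}$. A direct expansion gives
\[
g_*(u_1^{j-1}) - u_1^{j-1} \;=\; (1 - j)\, c_1(g)\, u_1^j \;+\; (\text{higher order in } u_1)
\]
modulo $p$; the higher-order tail lies in $N$ by the inductive hypothesis. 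When $j \not\equiv 1 \pmod p$ (equivalently $m \not\equiv 1 \pmod p$), the leading coefficient is a unit of $\FF_{p^2}$ provided $c_1(g) \neq 0$, and choosing $g$ so that $c_1(g)$ has nonzero Galois trace --- achievable via the formulas of \fullref{sec:lader} --- combines with the Galois reduction to force $u_1^j \in N$. The residual subcase $m \equiv 1 \pmod p$ is handled identically with $u_1^{j-2}$ in place of $u_1^{j-1}$: the $u_1^{j-1}$-coefficient of $g_*(u_1^{j-2}) - u_1^{j-2}$ lies in $\FF_{p^2} u_1^{j-1} \subseteq N$ since $(p+1) \nmid j-1$, while the $u_1^j$-coefficient reduces modulo $p$ to $c_2(g)$ (the prefactor $\binom{(j-2)(p-1)}{2}$ vanishing modulo $p$), so it suffices to realize $c_2(g)$ with nonzero Galois trace.

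The main obstacle is verifying that as $g$ ranges over the explicit generators of $\SS_2$ described in \fullref{sec:lader}, the coefficient functions $c_1(g)$ and $c_2(g)$ actually take values of nonzero Galois trace in $\FF_{p^2}$. This is a combinatorial bookkeeping exercise, requiring extra care at $p = 2$ and $p = 3$, where many of the binomial prefactors collapse modulo $p$ and one must look further into the expansion of $t_0^{(j-\ell)(p-1)}$ before a nonvanishing leading coefficient appears.
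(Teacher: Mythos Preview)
Your overall architecture matches the paper: reduce to finite quotients via \eqref{eqn:invsys}, use $\zeta$ to kill $u_1^j$ for $(p+1)\nmid j$ (this is \fullref{prop:roots}), use Galois to pass from $\FF_{p^2}$ to $\FF_p$, and for $j=m(p+1)$ with $m\not\equiv 1\pmod p$ act with $g=1+S$ on $u_1^{j-1}$ (this is \fullref{prop:pplusone}). So far so good.

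The genuine gap is the subcase $m\equiv 1\pmod p$. Your proposal is to act on $u_1^{j-2}$ and extract the coefficient $c_2(g)$ of $u_1^2$ in $t_0$; you assert this reduces to realizing $c_2(g)$ with nonzero trace. But by \fullref{thm:t0}, for any odd prime $p$ and any $g$ with $g_0=1$ one has
\[ t_0 \equiv 1 + g_1^p u_1 - g_1 u_1^p + \cdots \pmod{p,\,u_1^{p+1}}, \]
so $c_2(g)=c_3(g)=\cdots=c_{p-1}(g)=0$ identically; your relation is vacuous. For $p=2$ one does have $c_2(g)=g_1\ne 0$, but your claimed vanishing of $\binom{(j-2)(p-1)}{2}=\binom{j-2}{2}$ fails when $m\equiv 3\pmod 4$ (then $j-2\equiv 3\pmod 4$ and $\binom{j-2}{2}\equiv 1\pmod 2$), and the resulting $u_1^j$--coefficient is $g_1+g_1=0$. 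Thus in both regimes the step produces no information. This is not mere bookkeeping: the paper's treatment of this case is the heart of the proof and uses a recursive reduction governed by the $p$-adic expansion of $m$. For odd $p$ one writes $m=1+p+\cdots+p^{\ell-1}+p^\ell\eta$ with $\eta\not\equiv 1\pmod p$, defines $k_r=k-(p+1)(1+\cdots+p^{r-1})$, and proves $[u_1^{k_r}]=-[u_1^{k_{r+1}}]$ by acting with $1+S$ on $u_1^{k_{r+1}-p^r}$ (\fullref{prop:stepsodd}); this requires $t_0$ modulo $u_1^{2p+1}$, not just $u_1^3$. The chain terminates with \fullref{prop:lastodd}. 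For $p=2$ the recursion (\fullref{prop:stepstwo}, \fullref{prop:finaltwo}) needs a different group element $g=1+\zeta S^2+\zeta S^4$ and the deeper approximation of \fullref{thm:actp2}. None of this is visible from the first two Taylor coefficients of $t_0$.
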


The proof of \fullref{thm:mainmodp} uses formulas for the action of $\GG_2$. We begin with a summary of the results which are covered in detail in \fullref{sec:lader}.
\subsection{Summary of the action} The action of $\G_2$ on 
\[R_2/p =  \FF_{p^2}[\![ u_1 ]\!] \]
is given by \eqref{eqn:actgen}, modulo a computation of the unit $t_0$. The following result, which is \cite[Corollary 3.4]{lader} for $p\geq 5$ and \cite[Section 4.1]{hkm} for $p=3$, is sufficient for our purposes when $p$ is odd. We will review the proof of this result in \fullref{sec:lader} below and generalize it to include the case $p=2$.
\begin{thm}\label{thm:t0}
Let $p$ be any prime. Let $g \in \SS_2$ be such that $g=1+g_1S+g_2S^2$ modulo $(S^3)$. Then
\[t_0= 1+g_1^pu_1-g_1u_1^p+(g_2-g_2^p)u_1^{p+1}+\sum\limits_{i=1}^{p-1}{\frac{1}{p}}{{p}\choose{i}}g_1^{pi}u_1^{p+1+i} + g_1^2u_1^{2p}+g_1^{p}  u_1^{p^2} \mod (p, u_1^{2p+1}).\]
\end{thm}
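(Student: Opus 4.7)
The plan is to follow the standard strategy for the action of $g \in \SS_2$ on the Lubin--Tate ring: one produces the unique $*$-isomorphism $t \co F \to F'$ of formal group laws over $R_2$ that lifts $g$, where $F'$ is the deformation corresponding to the new parameter $g_*(u_1)$, and then extracts its leading coefficient $t_0$ as a power series in $u_1$ modulo $(p, u_1^{2p+1})$. The hypothesis $g \equiv 1 + g_1 S + g_2 S^2 \pmod{S^3}$ fixes the boundary conditions $t_0 \equiv 1$, $t_1 \equiv g_1$, $t_2 \equiv g_2$ modulo $(p, u_1)$, while $t_i \equiv 0$ modulo $(p, u_1)$ for $i \geq 3$.

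First I would set up the functional equations for $t(x) = t_0 x +_{F'} t_1 x^p +_{F'} t_2 x^{p^2} +_{F'} \cdots$, obtained from the two compatibility conditions $t(x +_F y) = t(x) +_{F'} t(y)$ and $t \circ [p]_F = [p]_{F'} \circ t$, where $[p]_F(x) = p x +_F u_1 x^p +_F x^{p^2}$, and where by \eqref{eqn:actgen} the new parameter is $u_1' := g_*(u_1) = t_0^{p-1} u_1 + t_0^{-1} t_1 (p - p^p)$. Working modulo $p$, the $p$-series reduces to $u_1 x^p + x^{p^2}$ in the additive group law, which substantially simplifies the compatibility equation with $[p]_F$.

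Second I would iteratively solve for $t_0, t_1, t_2$ as power series in $u_1$, starting from the boundary conditions and matching coefficients of $x^{p^k}$ in the relation $t([p]_F(x)) \equiv [p]_{F'}(t(x)) \pmod{p}$. Each term in the claimed formula traces back to a specific contribution: $g_1^p u_1$ and $-g_1 u_1^p$ come from first-order corrections forcing $t$ to intertwine the $u_1 x^p$ and $x^{p^2}$ summands of $[p]_F$; the term $(g_2 - g_2^p) u_1^{p+1}$ arises from the $S^2$-component $t_2 x^{p^2}$; the sum $\sum_{i=1}^{p-1} \frac{1}{p}\binom{p}{i} g_1^{pi} u_1^{p+1+i}$ comes from expanding $[p]_{F'}$ applied to the $t_1 x^p$ component, where the lift of the $p$-series to characteristic zero introduces the binomial-coefficient denominators $\binom{p}{i}/p$ (well-defined as integers); and the terms $g_1^2 u_1^{2p}$ and $g_1^p u_1^{p^2}$ are higher-order corrections from the same expansion.

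The main obstacle is the bookkeeping of cross-terms in the functional equation modulo $(p, u_1^{2p+1})$: the precision $u_1^{2p+1}$ is just high enough to see both the $u_1^{2p}$ corrections and, only for $p = 2$, the $u_1^{p^2}$ correction, so every contribution of $t_1$ and $t_2$ into the $x^{p^2}$-coefficient of both sides must be tracked. For odd primes $p \geq 5$ the result is Lader \cite[Corollary 3.4]{lader} after verification of the precision, and for $p = 3$ it is \cite[Section 4.1]{hkm}; the genuinely new content is $p = 2$, where the term $g_1^p u_1^{p^2} = g_1^{2} u_1^{4}$ is nontrivial modulo $u_1^{5}$ and small-prime coincidences such as $p^2 = 2p$ collapse distinct monomials that are separate at odd primes. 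Thus the iteration must be carried out from scratch at $p = 2$ to confirm that the universal formula continues to hold and that no new terms appear.
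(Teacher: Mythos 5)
Your plan is the same general strategy as the paper: set up the $\star$-isomorphism functional equation $h_g([p]_{g_*F}(x)) = [p]_F(h_g(x))$, match coefficients of $x^{p^k}$ modulo $p$, and iterate using the boundary conditions $t_i \equiv g_i \pmod{p,u_1}$, with the two-step precision upgrade (first $t_0,t_1$ modulo $u_1^{p+1}$, then resubstitution to reach $u_1^{2p+1}$) implicit in your description. However, the plan elides the single ingredient that makes the coefficient-matching tractable, and one of your heuristic attributions is misleading. You say that modulo $p$ the $p$-series ``reduces to $u_1 x^p + x^{p^2}$ in the additive group law'' and that the binomial sum $\sum_i \frac{1}{p}\binom{p}{i} g_1^{pi}u_1^{p+1+i}$ comes from ``expanding $[p]_{F'}$.'' Neither is quite right: $F$ is not additive modulo $p$, and the binomial coefficients arise from $C_p(x,y) = \frac{1}{p}\bigl((x+y)^p - x^p - y^p\bigr)$, the leading non-additive correction in the formal group law $F$ itself. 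Extracting the $x^{p^2}$ and $x^{p^3}$ coefficients therefore requires an explicit approximation $F(x,y) \equiv x+y - \frac{u_1}{1-p^{p-1}}C_p(x,y) - \sum_{i=1}^p u_1^{i+1}P_{p+i(p-1)}(x,y) - \cdots$ modulo $(x,y)^{p^2+1}$, which the paper derives via Lagrange inversion of $\log_F$ and which furnishes the recursion relations $t_0 = t_0^{p^2} + u_1 t_1^p - t_0^{p(p-1)}t_1 u_1^p$ and $t_1 = t_1^{p^2} + t_2^p u_1 - \sum_{i=1}^{p-1}\frac{1}{p}\binom{p}{i}u_1^{i+1}t_1^{pi}t_0^{p^2(p-i)}$. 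Without that explicit control on the non-additivity of $F$ the iteration cannot be carried out, so while your outline points in the right direction, the hard content of the argument is precisely what the sketch abbreviates away. Your observation about the $p=2$ coincidence (that $2p = p^2$, so the $g_1^2 u_1^{2p}$, $g_1^p u_1^{p^2}$, and $i=1$ binomial terms all collapse onto $u_1^4$ and their mod-$2$ sum must be checked) is correct and is indeed what the paper's extension of Lader's result to $p=2$ has to verify.
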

When $p=2$, we will need more information about the action of $g$. We give a computer assisted
 proof of the following result in \fullref{sec:lader}.\footnote{If one is willing to work with the formal group law of a super-singular elliptic curve rather than the Honda formal group law, an analogue of \fullref{thm:actp2} follows from Section 6 of \cite{Paper2} where the results were obtained directly. The analogue of \fullref{thm:mainmodp} also holds in this case, the proof being completely analogous to the one provided below.}
 \begin{thm}\label{thm:actp2}
Let $p=2$. If $g = 1+g_2S^2 +g_3S^3+g_4S^4+\ldots $, then  
\[t_0 = 1+ (g_2+g_2^2)u_1^3+g_3u_1^5+g_3u_1^8+(g_4+g_4^2)u_1^9   \mod (2, u_1^{10}) .\]
\end{thm}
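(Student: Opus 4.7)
The plan is to follow the same recursive scheme as in the proof of \fullref{thm:t0}, specialized to $p=2$ and to an element $g \in \SS_2$ with $g_0 = 1$ and $g_1 = 0$, but pushed several orders of $u_1$ further. As reviewed in \fullref{sec:lader}, the coefficient $t_0$ is determined, together with the higher $t_i$, by a functional equation in $R_2[\![x]\!]$ relating the action of $g$ on the deformation $F$; concretely, one has a relation of the form
\[[g]_F(x) \; = \; t_0 x \; +_F \; t_1 x^{p} \; +_F \; t_2 x^{p^2} \; +_F \; \cdots,\]
with $t_i \equiv g_i \pmod{(p, u_1)}$ and with the $t_i$ constrained by the compatibility $[g]_F \circ [p]_F = [p]_F \circ [g]_F$ (equivalently, by the Honda condition $[p]_{\mathbb{H}_2}(x) = x^{p^2}$ after reduction mod $p$). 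At $p=2$, the hypothesis $g_1 = 0$ forces $t_1 \equiv 0 \pmod{(2, u_1)}$, which eliminates the lowest-order obstructions and isolates the contributions of $g_2, g_3, g_4$ to the corrections of $t_0$.

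My strategy is to solve this functional equation iteratively modulo $(2, u_1^{10})$. First expand $F(x,y)$ and $[2]_F(x) \equiv u_1 x^2 +_F x^4 \pmod 2$ modulo $(2, u_1^{10}, x^N)$ for a sufficiently large truncation exponent $N$ (dictated by the $u_1^{10}$-precision we need; $N$ on the order of $2^k$ for some moderate $k$ suffices, since the $u_1$ weight in $[2]_F$ grows as $x$ does). Next, write $g = (1)(1 + S^{-2} g_2' S^2)\cdots$ in a form amenable to iteration, or equivalently compute $[S^i]_F$ inductively from $[S^2]_F = [2]_F$ together with the conjugation relation $S a = a^\sigma S$, and then combine via formal addition to assemble $[g]_F(x)$ modulo the prescribed truncation. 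Finally, extract the coefficient of $x$ in $[g]_F(x)$; after accounting for the contributions of the $t_i x^{2^i}$ with $i \geq 1$ (which, under our assumptions, are controlled because $t_1 \equiv 0$ to high $u_1$-order), this isolates $t_0 \bmod (2, u_1^{10})$.

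The main obstacle is the sheer combinatorial size of the expressions involved: even mod $2$ the Honda $2$-series $[2]_F(x) = u_1 x^2 +_F x^4$ compounds rapidly under repeated formal addition, and one must track a large number of monomials in the variables $u_1$, $x$, $g_2, g_3, g_4$ and their Frobenius twists. For this reason the verification is carried out by a direct symbolic computation in the truncated ring $\FF_{p^2}[g_2,g_3,g_4][\![u_1]\!]/(u_1^{10})[x]/(x^N)$, rather than by a pen-and-paper derivation. As a sanity check, specializing \fullref{thm:actp2} to the precision $u_1^{2p+1} = u_1^5$ recovers the term $(g_2 - g_2^2) u_1^3 = (g_2 + g_2^2)u_1^3$ predicted by \fullref{thm:t0} with $g_1 = 0$; the remaining terms $g_3 u_1^5$, $g_3 u_1^8$ and $(g_4 + g_4^2)u_1^9$ constitute the genuinely new information, and their first appearance at precisely the powers $u_1^5$ (next after $u_1^3$ in the $g_3$-weight) and $u_1^9$ (the analogue for $g_4$) reflects the combinatorial pattern $u_1^{2^{i+1}+1}$ that $g_i$ must land at under the $[2]_F$-recursion.
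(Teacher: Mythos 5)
Your high-level strategy---derive $t_0$ by solving a functional equation truncated to the necessary precision in $u_1$ and $x$, with the heavy lifting done by symbolic computation---is the same in spirit as the paper's ``Computer Assisted Proof.'' But your setup of that functional equation has two concrete problems that would derail the computation.

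First, the relation you write down, $[g]_F \circ [p]_F = [p]_F \circ [g]_F$, is not what constrains the $t_i$. The $t_i$ assemble not into an endomorphism of $F$ but into a $\star$-isomorphism $h_g\colon g_*F\to F$ via $h_g(x) = \sideset{}{^F}\sum_i t_i x^{p^i}$, and the correct compatibility is $h_g([p]_{g_*F}(x)) = [p]_F(h_g(x))$, i.e.\ \eqref{eq:recursive}. The distinction is not pedantic: $[p]_{g_*F}(x) = px +_{g_*F} g_*(u_1)\,x^p +_{g_*F} x^{p^2}$ involves the unknown $g_*(u_1)$, which the paper first pins down (\fullref{thm:tislad}(a)) as $t_0^{p-1}u_1 + t_0^{-1}t_1(p-p^p)$, reducing to $t_0 u_1$ mod $(2)$. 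Only then does \eqref{eq:recursive} become the tractable \eqref{eq:theonep2}, which is what one actually compares coefficients in. Without this step the equation you would feed to the computer has $g_*(u_1)$ dangling as an unresolved unknown.

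Second, the proposed intermediate step---``compute $[S^i]_F$ inductively from $[S^2]_F = [2]_F$ together with $Sa = a^\sigma S$, then combine via formal addition to assemble $[g]_F$''---is circular. Passing from $[S^2]_F=[2]_F$ to a lift $h_S$ of $S$ to the deformation would require extracting the very $t_i(S)$ you do not yet have; there is no free closed form for $h_S$ that one could iterate. (Moreover $S\notin\SS_2$, so the $h_g$/$t_i(g)$ formalism as used in the paper applies to $g\in\SS_2$ directly, not to a factorization of $g$ through powers of $S$, and the non-commutativity $Sa=a^\sigma S$ means the hypothetical assembly would not in any case be a simple formal sum of the pieces.) The paper's route is different and avoids this: \fullref{cor:iniformp2} seeds $t_0,t_1,t_2$ to low $u_1$-precision, \fullref{lem:Fmod16} bounds the required precision of $F$, and three applications of \eqref{eq:theonep2}---comparing coefficients of $x^{32}$, $x^{16}$, $x^{8}$ mod $(2,u_1^2)$, $(2,u_1^4)$, $(2,u_1^8)$---bootstrap $t_3$, $t_2$, $t_1$ and, fed back into \fullref{cor:iniformp2}(b), yield $t_0$ mod $(2,u_1^{10})$. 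As a minor point, the pattern you cite in the closing remark should read $u_1^{2^{i-1}+1}$ ($g_2\mapsto u_1^3$, $g_3\mapsto u_1^5$, $g_4\mapsto u_1^9$), not $u_1^{2^{i+1}+1}$.
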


\subsection{Prime independent arguments} 
The bulk of the proof of \fullref{thm:coinvSS} will be in proving the following proposition. We abbreviate $\SS=\SS_2$ and $R=R_2$ and let $[x]$ denote the image of an element $x$ under the natural map
$\FF_{p^2}[ u_1 ]/(u_1^k) \to (\FF_{p^2}[ u_1 ]/(u_1^k))_{\SS}$.

\begin{prop}\label{prop:keyres}
For $k \geq 2$, $[u_1^{k-1}] =0$ in $ (\FF_{p^2}[ u_1 ]/(u_1^{k}) )_{\SS} $.
\end{prop}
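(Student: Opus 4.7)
The plan is to prove \fullref{prop:keyres} by strong induction on $k$, and along the way to establish the sharper statement that $[u_1^j] = 0$ in $(M_k)_\SS$ for all $1 \le j \le k-1$, where $M_k$ denotes $\FF_{p^2}[u_1]/(u_1^k)$. The base case $k = 2$ is immediate from \eqref{eqn:actzeta}: the identity $(1-\zeta_*)(u_1) = (1-\zeta^{p-1}) u_1$ has invertible coefficient, since $\zeta^{p-1}$ has order $p+1$ in $\FF_{p^2}^\times$.

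For the inductive step, one exploits the $\SS$-equivariant short exact sequence
\[
0 \to (u_1^{k-1})/(u_1^k) \to M_k \to M_{k-1} \to 0
\]
to reduce to showing $[u_1^{k-1}] = 0$ in $(M_k)_\SS$. Applying the right-exact coinvariants functor together with the sharper inductive hypothesis on $M_{k-1}$, each $[u_1^j] \in (M_k)_\SS$ with $j \le k-2$ lies in the image of $((u_1^{k-1})/(u_1^k))_\SS \cong \FF_{p^2}$, hence is an $\FF_{p^2}$-multiple of $[u_1^{k-1}]$; killing the latter then kills them all.

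The proof of $[u_1^{k-1}] = 0$ splits into two cases. When $(p+1) \nmid (k-1)$, the element $\zeta$ acts on $u_1^{k-1}$ by the scalar $\zeta^{(p-1)(k-1)} \ne 1$, yielding $[u_1^{k-1}] = 0$ directly. When $k - 1 = m(p+1)$, one uses elements $g = 1 + g_1 S + g_2 S^2 + \cdots \in \SS$; equation \eqref{eqn:actgen} reduces modulo $p$ to $g_*(u_1^\ell) = (t_0^{p-1})^\ell u_1^\ell$, and \fullref{thm:t0} (with \fullref{thm:actp2} when $p=2$) makes $(1-g_*)(u_1^\ell)$ in $M_k$ an explicit polynomial in $g_1, g_2, \ldots$ and $u_1$. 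The natural first attempt, $g = 1 + g_1 S$ and $\ell = k - 2$, gives $(1-g_*)(u_1^{k-2}) = (k-2)\,g_1^p\, u_1^{k-1}$ in $M_k$, with invertible coefficient whenever $p \nmid (k-2)$. When $p \mid (k-2)$ one descends to a smaller input; for the prototypical case $k = p+2$, taking $\ell = 1$ yields
\[
(1 - g_*)(u_1) \;=\; \sum_{j=2}^{p} (-1)^j g_1^{(j-1)p}\, u_1^j \;-\; g_1\, u_1^{p+1} \quad\text{in } M_{p+2},
\]
and varying $g_1$ over $\FF_{p^2}^\times$ produces a polynomial identity in $g_1$ of degree $(p-1)p < p^2 - 1$; a Vandermonde argument then extracts $[u_1^{p+1}] = 0$ after the Case-1 classes $[u_1^j]$ for $2 \le j \le p$ are cancelled.

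The main obstacle is the residual range $p \mid (k-2)$: for each such $k$ one must exhibit an input $u_1^\ell$ and an element $g$ (possibly with several nonzero higher Teichm\"uller coefficients) whose resulting relation in $M_k$ carries $u_1^{k-1}$ with invertible coefficient after the lower-degree terms have been dispatched by Case~1 or by earlier Case~2 relations. At $p = 2$ this step is the most delicate, since the $S^2$-depth expansion of \fullref{thm:t0} alone does not reach far enough in $u_1$; the deeper expansion of \fullref{thm:actp2} must be combined with elements $g$ having nontrivial $g_2, g_3, g_4 \in \FF_4$ in order to generate the necessary relations.
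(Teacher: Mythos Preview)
Your plan is correct through the easy cases and matches the paper exactly: your Case~1 is the paper's \fullref{prop:roots}, and your first attempt in Case~2 (act by $g=1+g_1S$ on $u_1^{k-2}$) is the paper's \fullref{prop:pplusone}. Your inductive framing is fine, and your prototypical computation for $k=p+2$ is also correct.

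The genuine gap is in the residual range $p\mid(k-2)$, i.e.\ $k-1=(p+1)m$ with $m\equiv 1\pmod p$. You say one must ``exhibit an input $u_1^\ell$ and an element $g$'' whose relation carries $u_1^{k-1}$ with invertible coefficient ``after the lower-degree terms have been dispatched by Case~1 or by earlier Case~2 relations.'' But your induction only tells you that each lower multiple-of-$(p+1)$ class $[u_1^n]$ equals $c_n[u_1^{k-1}]$ for some \emph{unknown} scalar $c_n\in\FF_{p^2}$; without computing the $c_n$ you cannot verify that the resulting coefficient on $[u_1^{k-1}]$ is nonzero. So ``earlier Case~2 relations'' does not, as stated, give a mechanism that closes the argument.

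The paper resolves this not by induction on $k$ but by a chain argument inside the fixed module $M_k$. Writing $k-1=(p+1)(1+p+\cdots+p^{\ell-1}+p^\ell\eta)$ with $\eta\not\equiv 1\pmod p$, one sets $k_0=k-1$ and $k_{r}=k_{r-1}-(p+1)p^{r-1}$. For odd $p$, acting by $g=1+S$ on $u_1^{k_{r+1}-p^r}$ and expanding $t_0^{p-1}$ to sufficient $u_1$-adic order produces a relation in which, after Case~1 kills all non-multiples of $p+1$, exactly two terms survive, yielding $[u_1^{k_r}]=-[u_1^{k_{r+1}}]$. Iterating gives $[u_1^{k_0}]=\pm[u_1^{k_{\ell-1}}]$, and a separate direct computation (again with $g=1+S$, input $u_1^{k_{\ell-1}-p^\ell}$) shows $[u_1^{k_{\ell-1}}]=0$ using $\eta\not\equiv 1\pmod p$. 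For $p=2$ the chain step uses $g=1+\zeta S^2+\zeta S^4$ (so that the relevant coefficient $\zeta+\zeta^2=1$) together with \fullref{thm:actp2}, and the terminating step again uses $g=1+S$. The essential point you are missing is this specific choice of inputs $u_1^{k_{r+1}-p^r}$, engineered so that after Case~1 only \emph{two} multiples of $p+1$ remain; that is what makes the chain close without ever needing the unknown constants $c_n$.
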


Assuming \fullref{prop:keyres}, we prove the following result, which immediately implies \fullref{thm:mainmodp} by taking Galois coinvariants since $(\FF_{p^2})_{\Gal} \cong \FF_p$.
\begin{prop} \label{thm:coinvSS}
The quotient map $\FF_{p^2}[\![ u_1]\! ] \to \FF_{p^2}$ induces an isomorphism
\[(\FF_{p^2}[\![ u_1]\! ] )_{\SS} \cong \FF_{p^2}.\]
\end{prop}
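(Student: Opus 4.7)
The plan is to use the inverse-limit description \eqref{eqn:invsys} to reduce \fullref{thm:coinvSS} to the finite-level statement that $(\FF_{p^2}[u_1]/(u_1^k))_{\SS} \cong \FF_{p^2}$ for every $k\geq 1$, and then to establish this by an induction on $k$ whose engine is \fullref{prop:keyres}.

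For the induction, the base case $k=1$ is tautological. For $k\geq 2$, I would consider the short exact sequence of finite discrete $\SS$-modules
\[ 0 \to (u_1^{k-1})/(u_1^k) \to \FF_{p^2}[u_1]/(u_1^k) \to \FF_{p^2}[u_1]/(u_1^{k-1}) \to 0, \]
which is $\SS$-stable because \eqref{eqn:actgen} shows that $\SS$ preserves the $u_1$-adic filtration on $R_2/p$. Right exactness of coinvariants yields
\[ ((u_1^{k-1})/(u_1^k))_{\SS} \to (\FF_{p^2}[u_1]/(u_1^k))_{\SS} \to (\FF_{p^2}[u_1]/(u_1^{k-1}))_{\SS} \to 0. \]
The kernel $(u_1^{k-1})/(u_1^k)$ is the one-dimensional $\FF_{p^2}$-vector space spanned by $u_1^{k-1}$. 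Because $\SS$ acts $\WW$-linearly on $R_2$ (the Galois action belongs to $\Gal\subset\GG_2$, not to $\SS$), reduction modulo $p$ gives an $\FF_{p^2}$-linear $\SS$-action on $R_2/p$, and this $\FF_{p^2}$-structure descends to coinvariants. By \fullref{prop:keyres}, the class $[u_1^{k-1}]$ vanishes in $(\FF_{p^2}[u_1]/(u_1^k))_{\SS}$; by $\FF_{p^2}$-linearity the image of the first arrow is $\FF_{p^2}\cdot[u_1^{k-1}]=0$, so the second arrow is an isomorphism. Combined with the inductive hypothesis this gives $(\FF_{p^2}[u_1]/(u_1^k))_{\SS}\cong\FF_{p^2}$, compatibly with further quotients down the tower.

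Passing to the inverse limit via \eqref{eqn:invsys}, all transition maps in the system $\{(\FF_{p^2}[u_1]/(u_1^k))_{\SS}\}_k$ are isomorphisms onto $\FF_{p^2}$, so $(\FF_{p^2}[\![u_1]\!])_{\SS}\cong \FF_{p^2}$, and the isomorphism is the one induced by $u_1\mapsto 0$. The only substantive input is \fullref{prop:keyres}; everything in the reduction above is formal bookkeeping. The real obstacle therefore lies not in this step but in proving the single vanishing $[u_1^{k-1}]=0$ in $(\FF_{p^2}[u_1]/(u_1^k))_{\SS}$ uniformly in $k$ and $p$, which is where the explicit action formulas of \fullref{thm:t0} and (for $p=2$) \fullref{thm:actp2} are needed.
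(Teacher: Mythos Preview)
Your proof is correct and follows essentially the same approach as the paper: both use right exactness of coinvariants applied to the short exact sequence $(u_1^{k-1})/(u_1^k)\to \FF_{p^2}[u_1]/(u_1^k)\to \FF_{p^2}[u_1]/(u_1^{k-1})$, invoke \fullref{prop:keyres} to conclude that the leftmost map on coinvariants is zero, and then pass to the inverse limit via \eqref{eqn:invsys}. Your version is slightly more explicit about the $\FF_{p^2}$-linearity and the inductive framing, but the argument is the same.
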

\begin{proof}
Since taking coinvariants is a right exact functor, the maps in the inverse system \eqref{eqn:invsys} fit into an exact sequence
\[ \xymatrix{  ((u_1^{k-1})/(u_1^{k}))_{\SS}   \ar[r] & (\FF_{p^2}[u_1 ]/(u_1^{k}))_{\SS}  \ar[r] &  (\FF_{p^2}[u_1 ]/(u_1^{k-1}) )_{\SS}  \ar[r]&  0 }  \] 
and \fullref{prop:keyres} implies that the left map is trivial. Therefore, \eqref{eqn:invsys} is a constant inverse system whose first term is $\FF_{p^2}$.\end{proof}

We turn to the proof of \fullref{prop:keyres}. We begin with a simple result. 
\begin{prop}\label{prop:roots}
If $n$ is not of the form $(p+1)\tr$, then for all $k \geq 0$,  $[u_1^n]=0$ in $(\FF_{p^2}[ u_1 ] /(u_1^{k}) )_{\SS}$.
\end{prop}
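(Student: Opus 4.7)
The plan is to exploit the explicit diagonal action of the Teichm\"uller lift $\zeta \in \WW^\times \subseteq \SS_2$ recalled in \eqref{eqn:actzeta}, namely $\zeta_*(u_1) = \zeta^{p-1} u_1$. Because this automorphism scales $u_1$, it scales $u_1^n$ by $\zeta^{(p-1)n}$, and this single element of $\SS_2$ will be enough to kill $[u_1^n]$ whenever $n$ is not a multiple of $p+1$.

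More precisely, in the coinvariants $M_\SS$ every element of the form $(g_*-1)(m)$ is trivial. Taking $g = \zeta$ and $m = u_1^n$ in $M = \FF_{p^2}[u_1]/(u_1^k)$ gives the identity
\[
\bigl(\zeta^{(p-1)n}-1\bigr)\,[u_1^n] = 0.
\]
Thus it suffices to check that $\zeta^{(p-1)n} \neq 1$ whenever $p+1 \nmid n$: in that case $\zeta^{(p-1)n}-1$ is a unit in $\FF_{p^2}$, which we can invert on both sides to conclude $[u_1^n] = 0$.

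Since $\zeta$ is a primitive $(p^2-1)$-th root of unity, $\zeta^{(p-1)n} = 1$ is equivalent to $(p^2-1) \mid (p-1)n$, which after cancelling $p-1$ becomes $(p+1)\mid n$, i.e.\ $n = (p+1)\tr$ for some nonnegative integer $\tr$. Contrapositively, if $n$ is not of this form, then $\zeta^{(p-1)n} \neq 1$ and the argument above yields $[u_1^n] = 0$, as claimed.

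I do not anticipate a real obstacle here: the point is that the $\WW^\times$-subgroup of $\SS_2$ acts diagonally on $u_1$, reducing the problem of killing most monomials to the elementary observation about the order of $\zeta^{p-1}$ in $\FF_{p^2}^\times$. All of the genuinely delicate work is deferred to \fullref{prop:keyres}, where the surviving monomials $u_1^{(p+1)\tr}$ must be handled using the more intricate formulas of \fullref{thm:t0} (and \fullref{thm:actp2} at $p=2$).
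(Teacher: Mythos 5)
Your argument is correct and is exactly the paper's proof: apply the scaling $\zeta_*(u_1^n) = \zeta^{(p-1)n}u_1^n$ coming from \eqref{eqn:actzeta}, observe that $(\zeta^{(p-1)n}-1)[u_1^n]=0$ in the coinvariants, and note that this factor is a unit in $\FF_{p^2}$ precisely when $p+1\nmid n$. Your added detail on reducing $(p^2-1)\mid(p-1)n$ to $(p+1)\mid n$ is a fine (if slightly more explicit) unpacking of the same step.
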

\begin{proof}
By \eqref{eqn:actzeta}, 
\begin{align*}
\zeta_*(u_1^n) &= \zeta^{n(p-1)} u_1^n = u_1^n + (\zeta^{n(p-1)}-1) u_1^n.
\end{align*}
Therefore, $ (\zeta^{n(p-1)}-1) [u_1^n]=0$. Since $\zeta$ is a primitive $p^2-1$th root of unity, then $\zeta^{n(p-1)}-1$ is a unit in $\FF_{p^2}$ provided that $p+1$ does not divide $n$. It follows that, in this case, $[u_1^n]=0$.
\end{proof}

\begin{rem}
Note that this result is stronger than \fullref{prop:keyres} in the case $n=k-1$. It will be used in its full strength in our proof of \fullref{prop:keyres}.
\end{rem}

The technique for showing that $[u_1^{k}]=0$ in $(\FF_{p^2}[u_1]/(u_1^{k+1}))_{\SS}$ for $k=(p+1)\tr$ varies on the $p$-adic expansion of $\tr$. 
\begin{prop}\label{prop:pplusone}
If $k=(p+1)\tr$ for $\tr$ non trivial such that $\tr \neq 1$ modulo $(p)$, then $[u_1^{k}]=0$ in $(\FF_{p^2}[u_1]/(u_1^{k+1}))_{\SS}$.
\end{prop}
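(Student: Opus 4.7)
The plan is to find a single element $v \in \FF_{p^2}[u_1]/(u_1^{k+1})$ and a single group element $g \in \SS$ so that
\[ g_* v - v \equiv c\, u_1^{k} \pmod{(p,u_1^{k+1})} \]
with $c \in \FF_{p^2}^{\times}$. Since $g_*v - v$ is automatically trivial in the coinvariants, this would immediately force $[u_1^k]=0$. The most economical guess is $v=u_1^{k-1}$ together with $g=1+S\in\SS$ (which lies in $\SS$ since its zeroth coefficient is $1$).

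Working modulo $(p)$ throughout, the formula \eqref{eqn:actgen} reduces to $g_*(u_1)=t_0^{p-1}u_1$, so that
\[ g_*(u_1^{k-1}) \;=\; t_0^{(p-1)(k-1)}\, u_1^{k-1}. \]
To extract its class modulo $u_1^{k+1}$, I only need $t_0^{(p-1)(k-1)}$ modulo $u_1^2$. For $g=1+S$ (so $g_1=1$ and $g_2=0$), \fullref{thm:t0} gives $t_0\equiv 1+u_1\pmod{(p,u_1^2)}$, and hence
\[ t_0^{(p-1)(k-1)} \;\equiv\; 1+(p-1)(k-1)\,u_1 \;\equiv\; 1-(k-1)\,u_1 \pmod{(p,u_1^2)}. \]
Using $k=(p+1)\alpha$ I find $k-1\equiv \alpha-1\pmod p$, and therefore
\[ g_*(u_1^{k-1})-u_1^{k-1} \;\equiv\; -(\alpha-1)\, u_1^{k} \pmod{(p,u_1^{k+1})}. \]
The hypothesis $\alpha\not\equiv 1\pmod p$ then makes $-(\alpha-1)$ a unit in $\FF_p$, so the vanishing of the left-hand side in coinvariants gives $[u_1^{k}]=0$.

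There is no real obstacle here, since the only ingredient needed about the action is the linear-in-$u_1$ part of $t_0$, which is exactly the easiest piece of \fullref{thm:t0}. The slightly nontrivial choice is to act on $u_1^{k-1}$ rather than on a $v$ of higher degree: this choice ensures that only the degree-one coefficient of $t_0^{(p-1)(k-1)}$ can possibly contribute modulo $u_1^{k+1}$, avoiding the messier higher-order terms of $t_0$. The hypothesis $\alpha\not\equiv 1\pmod p$ enters precisely because $(k-1)$ modulo $p$ must be invertible in order for the coefficient of $u_1^k$ in $g_*(u_1^{k-1})-u_1^{k-1}$ to be a unit; when $\alpha\equiv 1\pmod p$ this strategy breaks down and a different argument (presumably in the following proposition) is needed.
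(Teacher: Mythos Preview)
Your proof is correct and is essentially identical to the paper's own argument: both choose $g=1+S$, act on $u_1^{k-1}$, and use the first-order approximation $t_0\equiv 1+u_1$ from \fullref{thm:t0} to conclude that $g_*(u_1^{k-1})-u_1^{k-1}\equiv (p-1)(k-1)\,u_1^k$ modulo $(p,u_1^{k+1})$, which is a unit multiple of $u_1^k$ precisely when $\alpha\not\equiv 1\pmod p$. The only cosmetic difference is that the paper phrases the nonvanishing condition as $k\not\equiv 1\pmod p$ and then observes $k\equiv\alpha\pmod p$, whereas you substitute $k-1\equiv\alpha-1$ directly.
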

\begin{proof}
Let $g= 1+S$.
It follows from \fullref{thm:t0} that 
\[t_0=1+u_1 \mod (p, u_1^{p}).\]
Therefore by \eqref{eqn:actgen}
\begin{align*}
g_*(u_1^{k-1}) &= u_1^{k-1}(1+u_1)^{(p-1)(k-1)} \mod (p, u_1^{p+k-1}) \\
&= u_1^{k-1}+ (p-1)(k-1)u_1^{k} \mod (p, u_1^{k+1}).
\end{align*}
So long as $k\neq 1$ modulo $(p)$, we can conclude that $[u_1^{k}]=0$ in $(\FF_{p^2}[u_1]/(u_1^{k+1}))_{\SS}$. Since $k= \tr$ modulo $(p)$, this proves the claim.
\end{proof}

\subsection{The remainder of the argument for odd primes} 
Now, we fix $p$ odd. The case $p=2$ will be treated below. We let
\begin{equation}\label{eqn:padic}
k = (p+1)(1 + p + p^2 + \ldots + p^{\ell-1} + p^\ell \eta) 
\end{equation}
for $\ell \geq 0$ and $\eta$ a non-negative integer such that $\eta  \neq 1$ modulo $(p)$. The complexity of the problem depends on $\ell$. The case when $\ell=0$ was \fullref{prop:pplusone}, so we now turn to the case when $\ell \geq 1$ in \eqref{eqn:padic}. Let
\begin{equation}\label{eqn:kr}
k_{r} = \begin{cases}  k & r=0 \\
k_{r-1}-(p+1)p^{r-1} & 1\leq r < \ell-1
\end{cases} \end{equation}
for $0 \leq r < \ell-1$. 

We prove that $[u_{1}^{k_r}] =- [u_1^{k_{r+1}}]$ for $0\leq r < \ell-1$ (\fullref{prop:stepsodd}) and $[u_1^{k_{\ell-1}}]=0$ (\fullref{prop:lastodd}) in $(\FF_{p^2}[u_1]/(u_1^{k+1}))_{\SS}$. Together, these results finish the proof of \fullref{prop:keyres}.

\begin{prop}\label{prop:stepsodd}
Let $k_r$ be as in \eqref{eqn:kr}. For $0\leq r < \ell-1$, 
\[[u_{1}^{k_r}] =- [u_1^{k_{r+1}}]\]
in $(\FF_{p^2}[u_1]/(u_1^{k+1}))_{\SS}$.
\end{prop}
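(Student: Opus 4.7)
The plan is to follow the strategy of Proposition \ref{prop:pplusone}. We choose $g \in \SS_2$ and a monomial $u_1^m$ so that the identity $g_*(u_1^m) = u_1^m$ in coinvariants, after expanding $g_*(u_1^m)$ explicitly modulo $(p, u_1^{k+1})$ and using Proposition \ref{prop:roots} together with previously established vanishing results to eliminate extraneous terms, yields exactly $[u_1^{k_r}] = -[u_1^{k_{r+1}}]$.

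The natural candidates are $g = 1 + S$ and $m = k_{r+1} - 1$. The key arithmetic input is that $k_{r+1} = p^{r+1} A$ with $A \equiv 1 \pmod p$, so
\[(k_{r+1}-1)(p-1) \equiv 1 - p - p^{r+1} \pmod{p^{r+2}}.\]
This yields a factorization $t_0^{(k_{r+1}-1)(p-1)} \equiv t_0 \cdot (t_0^p)^{-1} \cdot (t_0^{p^{r+1}})^{-1} \cdot (t_0^{p^{r+2}})^N \pmod p$ for some integer $N$, where by the freshman's dream $t_0^{p^s} \equiv 1 + u_1^{p^s} + (\text{higher}) \pmod p$ for each $s \geq 1$. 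Since $p^{r+2} > (p+1)p^r + 1$, the last factor is invisible to the coefficients of $u_1^{k_{r+1}}$ and $u_1^{k_r}$ in $g_*(u_1^m) = u_1^m \cdot t_0^{m(p-1)}$, and the three leading factors suffice to pin down those coefficients via Theorem \ref{thm:t0}.

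Carrying out the expansion, one reads off both relevant coefficients as $1 \pmod p$: the coefficient of $u_1^{k_{r+1}}$ comes from the $u_1$ term in $t_0$ multiplying the constant $1$ in $(t_0^p)^{-1}(t_0^{p^{r+1}})^{-1}$, while the coefficient of $u_1^{k_r}$ arises from the cross term between the $u_1$ in $t_0$ and the $u_1^{(p+1)p^r}$ contribution of $(t_0^p)^{-1}(t_0^{p^{r+1}})^{-1}$ visible through the Frobenius expansion. In the coinvariants, Proposition \ref{prop:roots} kills every $[u_1^n]$ with $(p+1) \nmid n$; the surviving intermediate $[u_1^{(p+1)\beta}]$ with $k_{r+1}/(p+1) < \beta < k_r/(p+1)$ are eliminated either by Proposition \ref{prop:pplusone} directly (since the $p$-adic gap structure of $k_{r+1}$ forces $\beta \not\equiv 1 \pmod p$ for most such indices) or by reverse induction on $r$. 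This leaves exactly the two-term relation $[u_1^{k_{r+1}}] + [u_1^{k_r}] = 0$.

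The main obstacle is the combinatorial bookkeeping: verifying that both isolated coefficients are indeed equal and nonzero modulo $p$, tracking the perturbations to $t_0^{(k_{r+1}-1)(p-1)}$ arising from the middle-degree terms $\frac{1}{p}\binom{p}{i} u_1^{p+1+i}$ of Theorem \ref{thm:t0} through the iterated Frobenius, and confirming that every intermediate surviving $[u_1^{(p+1)\beta}]$ is genuinely killed by a previously established result. All of these hinge on the $p$-adic gap structure of $k_{r+1}$, namely that its base-$p$ digits vanish below position $r+1$.
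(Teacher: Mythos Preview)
Your choice of exponent $m=k_{r+1}-1$ agrees with the paper's $m=k_{r+1}-p^{r}$ only when $r=0$. For $r\geq 1$ the paper's exponent is divisible by $p^{r}$, so modulo $p$ one has
\[
t_0^{(p-1)m}=\bigl(t_0^{(p-1)\alpha_r}\bigr)^{p^r},
\]
a power series in $u_1^{p^r}$. After multiplying by $u_1^{k_{r+1}-p^r}$, every exponent that appears has the form $k_{r+1}+(j-1)p^r$; since $\gcd(p^r,p+1)=1$ and $(p+1)\mid k_{r+1}$, Proposition~\ref{prop:roots} kills all terms except those with $(p+1)\mid(j-1)$, and modulo $u_1^{k+1}$ this forces $j\in\{1,p+2\}$, leaving precisely $[u_1^{k_{r+1}}]$ and $[u_1^{k_r}]$. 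The sparseness coming from $p^r\mid m$ is what isolates the two terms with no residue to manage.

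Your $m=k_{r+1}-1$ is coprime to $p$ (since $p\mid k_{r+1}$), so $t_0^{(p-1)m}$ is not sparse and the expansion of $g_*(u_1^m)$ potentially picks up every $(p+1)$-divisible exponent between $k_{r+1}$ and $k$, not just $k_{r+1}$ and $k_r$. Your proposed elimination of the extras does not work. Proposition~\ref{prop:pplusone} only asserts $[u_1^{(p+1)\alpha}]=0$ in the quotient by $u_1^{(p+1)\alpha+1}$, i.e., for the \emph{top} power; it says nothing about $[u_1^{(p+1)\beta}]$ inside the larger quotient $\FF_{p^2}[u_1]/(u_1^{k+1})$. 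The appeal to ``reverse induction on $r$'' fails for the same reason: the proposition relates $[u_1^{k_r}]$ to $[u_1^{k_{r+1}}]$ without asserting that either vanishes, so previously established instances cannot be fed back to kill intermediate classes. (Concretely, $\beta=k_{r+1}/(p+1)+1$ already satisfies $\beta\equiv 1\pmod p$, so even the heuristic ``most $\beta$ avoid $1\bmod p$'' breaks at the first step.) Finally, pinning down the coefficient of $u_1^{k_r}$ in your expansion would require $t_0$ to precision far beyond the $u_1^{2p+1}$ supplied by Theorem~\ref{thm:t0}; the single ``cross term'' you name is not the only contribution at that degree.
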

\begin{proof}
From \fullref{thm:t0}, we deduce that for $g=1+S$, 
\[t_0=1+u_1-u_1^p+\sum\limits_{i=1}^{p-1}{\frac{1}{p}}{{p}\choose{i}}u_1^{p+1+i} + u_1^{2p}\quad \mod (u_1^{2p+1}).\]
We use \eqref{eqn:actgen}, the fact that $a^p=a$ for $a\in \FF_p$, $(x+y)^p = x^p+y^p$ modulo $(p)$ and the fact that 
\[k_{r+1}-p^r = p^{r} \tr_r\]
where $\tr_r = (p+1)(p+\ldots+p^{\ell-1-r}+p^{\ell-r}\eta)-1$. With this, we deduce that
\begin{align*}
g_*(u_1^{k_{r+1}-p^r}) &= u_1^{k_{r+1}-p^r}\left(1+u_1^{p^{r}}-u_1^{p^{r+1}}+\sum\limits_{i=1}^{p-1}{\frac{1}{p}}{{p}\choose{i}} u_1^{p^{r+1}+(1+i)p^{r}} + u_1^{2p^{r+1}}\right)^{(p-1)\tr_r} 
\end{align*}
modulo $(u_1^{k_{r+1} +2p^{r+1}})$. We now simplify this equation.
We compute modulo $(u_1^{k+1})$ and note that
\begin{align*}
k+1 &= k_{r+1} + (p+1)(1+\ldots+p^r) +1 \\
& = k_{r+1} + 2(1+\ldots+p^r)+p^{r+1} <  k_{r+1}+3p^r+p^{r+1}.
\end{align*}
Therefore, we immediately get rid of all terms of the form $u_1^n$ for $n \geq k_{r+1}+3p^r+p^{r+1}$.
Next, we use the fact that $\tr_r= p-1$ modulo $(p^2)$ so that $(p-1)\tr_r = 1+p(p-2)$ modulo $(p^2)$. For $i=i_0+pi_1<p^2$ with $0\leq i_0,i_1 \leq p-1$, we then have 
\[\binom{(p-1)\tr_r}{i} 
= \binom{(p-1)^2}{i} 
=   \binom{1}{i_0} \binom{p-2}{i_1} \mod (p) ,\] 
where $\binom{m}{n}=0$ if $m<n$. In particular, $\binom{(p-1)^2}{2} = 0 $ modulo $(p)$.
Combining these facts, we obtain:
\begin{align*}
g_*(u_1^{k_{r+1}-p^r}) &= u_1^{k_{r+1}-p^r}(1+u_1^{p^{r}}-u_1^{p^{r+1}}+ u_1^{p^{r+1}+2p^{r}}+\frac{p-1}{2} u_1^{p^{r+1}+3p^{r}} + u_1^{2p^{r+1}})^{(p-1)\tr_r} \\
&= u_1^{k_{r+1}-p^r}\left(1  + \sum_{i=1}^{p+3} \binom{(p-1)\tr_r}{i} \left(u_1^{p^{r}}-u_1^{p^{r+1}}+ u_1^{p^{r+1}+2p^{r}}+\frac{p-1}{2} u_1^{p^{r+1}+3p^{r}} + u_1^{2p^{r+1}}\right)^i\right) \\
&= u_1^{k_{r+1}-p^r}\left(1  + \sum_{i=1}^{p+3} \binom{(p-1)^2}{i} \left(u_1^{p^{r}}-u_1^{p^{r+1}}+ u_1^{p^{r+1}+2p^{r}}+\frac{p-1}{2} u_1^{p^{r+1}+3p^{r}} + u_1^{2p^{r+1}}\right)^i\right) \\
&=u_1^{k_{r+1}-p^r}\\
&\ \ \ +( u_1^{k_{r+1}}-u_1^{k_{r+1}+p^{r+1}-p^r}+ u_1^{k_{r+1}+p^{r+1}+p^{r}}+\frac{p-1}{2} u_1^{k_{r+1}+p^{r+1}+2p^{r}} + u_1^{k_{r+1}+2p^{r+1}-p^r}) \\
&\ \ \ +\binom{(p-1)^2}{3}( u_1^{k_{r+1}+2p^{r}}-3u_1^{k_{r+1}+p^{r+1}+p^r})\\
&\ \ \ +\binom{(p-1)^2}{4}( u_1^{k_{r+1}+3p^{r}}-4u_1^{k_{r+1}+p^{r+1}+2p^r})\\
&\ \ \ + \sum_{i=5}^{p+3} \binom{(p-1)^2}{i}u_1^{k_{r+1}+p^{r}(i-1)}.
\end{align*}
Note further that, if $p\neq 3$, then $\binom{(p-1)^2}{3}=0$ modulo $(p)$. So $3\binom{(p-1)^2}{3}=0$ modulo $(p)$ for all primes.
The above computation then gives the following relation in the coinvariants
\begin{align*}
0&= [u_1^{k_{r+1}}] +[u_1^{k_{r+1}+p^{r+1}+p^r}] \\
& \ \ \   -[u_1^{k_{r+1}+p^{r+1}-p^r}] + [u_1^{k_{r+1}+2p^{r+1}-p^r}] \\
&  \ \ \  +\left(\frac{p-1}{2} -4\binom{(p-1)^2}{4}\right) [u_1^{k_{r+1}+p^{r+1}+2p^r}]  + \sum_{i=3}^{p+3}  \binom{(p-1)^2}{i}[u_1^{k_{r+1}+p^{r}(i-1)}]  .
\end{align*}
Recall that $[u_1^{n}]=0$ if $n$ is not a multiple of $p+1$. Since $k_{r+1}$ is a multiple of $p+1$, it follows that $[u_1^{k_{r+1}+p^{r+1}-p^r}] = [u_1^{k_{r+1}+p^{r+1}+2p^r}] = [u_1^{k_{r+1}+2p^{r+1}-p^r}]= 0$ modulo $(p)$. Similarly, the only term that can remain in the summation after taking this into account is the case $i=p+2$. However, $\binom{(p-1)^2}{p+2}=0$ modulo $(p)$, so the summation is also zero. Therefore, the second and third lines of the equation are zero in the coinvariants. 
We conclude that
\begin{align*}
0&= [u_1^{k_{r+1}}]+ [u_1^{k_{r+1}+p^{r+1}+p^{r}}].\qedhere
\end{align*}
\end{proof}

\begin{prop}\label{prop:lastodd}
Let $k_{\ell-1} = (p+1)(p^{\ell-1}+p^{\ell}\eta)$ for a non-negative integer $\eta$ such that $\eta \neq 1$ modulo $(p)$ as in \eqref{eqn:kr}. Then
\[[u_{1}^{k_{\ell-1}}] =0 \]
in $(\FF_{p^2}[u_1]/(u_1^{k+1}))_{\SS}$.
\end{prop}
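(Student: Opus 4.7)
The strategy is to mirror \fullref{prop:stepsodd}: apply $g = 1+S \in \SS$ to a suitable $u_1^m$ and extract a coinvariant relation modulo $u_1^{k+1}$. The direct analogue $m = k_{\ell-1} - p^{\ell-1}$ fails because $v_p(m) = \ell$: the Frobenius identity $t_0^{(p-1)m} = (t_0^{p^\ell})^{(p-1)N}$ together with $t_0^{p^\ell} \equiv 1 \pmod{u_1^{p^\ell}}$ and the routine bound $k - m = (2p^\ell-p-1)/(p-1) < p^\ell$ (for $p \geq 3$) force $g_*(u_1^m) \equiv u_1^m \pmod{u_1^{k+1}}$, giving nothing. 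I will therefore split into two cases according to whether $\eta = 0$.

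For $\eta = 0$, take $m = p^{\ell-1}$. Frobenius gives $g_*(u_1^m) \equiv \sum_i c_i u_1^{ip^{\ell-1}} \pmod p$, where $c_i$ is the coefficient of $u_1^i$ in $g_*(u_1)$. Using \fullref{thm:t0} one computes $g_*(u_1) \equiv u_1 - u_1^2 + \cdots + (-1)^{p-1}u_1^p + u_1^{p+1} \pmod{p, u_1^{p+2}}$, so $c_{p+1} \equiv 1 \pmod p$. The exponents $ip^{\ell-1}$ in range satisfy $i \leq \lfloor k/p^{\ell-1} \rfloor \leq p+2$ (for $p \geq 3$), and the only such $i$ that is a multiple of $p+1$ is $i = p+1$, giving $u_1^{k_{\ell-1}}$. \fullref{prop:roots} kills the rest, yielding $[u_1^{k_{\ell-1}}] = 0$.

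For $\eta \geq 1$, set $k_\ell := (p+1)p^\ell\eta = k_{\ell-1} - (p+1)p^{\ell-1}$ and take $m = k_\ell - p^{\ell-1}$, which has $v_p(m) = \ell - 1$. Writing $t_0^{(p-1)m} = (t_0^{p^{\ell-1}})^{(p-1)N}$ with $N = (p+1)p\eta - 1$ coprime to $p$ and $(p-1)N \equiv 1 \pmod p$, expand in $v = u_1^{p^{\ell-1}}$ using \fullref{thm:t0}. The congruence $m + Dp^{\ell-1} \equiv (-1)^\ell(D-1) \pmod{p+1}$ shows that, within the range $D \leq (k-m)/p^{\ell-1}$, only $D = 1$ (giving $u_1^{k_\ell}$) and $D = p+2$ (giving $u_1^{k_{\ell-1}}$) produce multiples of $p+1$. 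A short binomial computation shows both corresponding coefficients are $\equiv 1 \pmod p$, so combined with \fullref{prop:roots} we obtain the coinvariant relation $[u_1^{k_\ell}] + [u_1^{k_{\ell-1}}] = 0$. Since $k_\ell = (p+1)\tr'$ with $\tr' = p^\ell\eta \not\equiv 1 \pmod p$, \fullref{prop:pplusone} furnishes $[u_1^{k_\ell}] = 0$ in the smaller quotient $\FF_{p^2}[u_1]/(u_1^{k_\ell+1})$. The remaining task is to transfer this vanishing to the ambient ring $\FF_{p^2}[u_1]/(u_1^{k+1})$; this is the main technical obstacle, and requires an outer strong induction on $k$ combined with \fullref{prop:roots} to control the intermediate multiples of $p+1$ in $[k_\ell+1, k]$.
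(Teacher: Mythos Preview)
Your case $\eta = 0$ is correct and, in fact, coincides with the paper's argument: when $\eta = 0$ one has $k_{\ell-1} - p^\ell = (p+1)p^{\ell-1} - p^\ell = p^{\ell-1}$, which is exactly your choice of $m$.

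The gap is in the case $\eta \geq 1$. Your relation $[u_1^{k_\ell}] + [u_1^{k_{\ell-1}}] = 0$ is correctly derived, but the ``outer strong induction on $k$'' you invoke to kill $[u_1^{k_\ell}]$ in $(\FF_{p^2}[u_1]/(u_1^{k+1}))_{\SS}$ does not close the loop. Strong induction only gives $(\FF_{p^2}[u_1]/(u_1^{k}))_{\SS} \cong \FF_{p^2}$, which by right exactness means every $[u_1^j]$ with $1 \leq j \leq k-1$ lies in the one-dimensional subspace $\FF_{p^2}\cdot[u_1^k]$ of the ambient coinvariants. So you only get $[u_1^{k_\ell}] = c\,[u_1^k]$ for some \emph{unknown} $c \in \FF_{p^2}$. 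Feeding this and \fullref{prop:stepsodd} (which gives $[u_1^k] = (-1)^{\ell-1}[u_1^{k_{\ell-1}}]$) into your relation yields only $(1 + (-1)^{\ell-1}c)\,[u_1^{k_{\ell-1}}] = 0$, which is vacuous when $c = (-1)^\ell$. Nothing in your sketch rules this out; ``controlling the intermediate multiples of $p+1$'' would require redoing all of the preceding propositions in the larger quotient and tracking the resulting error terms, which is far more than the one line you allot it.

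The paper avoids this entirely by choosing $m = k_{\ell-1} - p^\ell$ rather than your $m = k_\ell - p^{\ell-1}$. This $m$ also has $v_p(m) = \ell-1$ (so the Frobenius obstruction you identified does not arise), but it is large enough that the resulting coinvariant relation involves only $[u_1^{k_{\ell-1}}]$ and no smaller $k_r$: one obtains directly $(1-\eta)[u_1^{k_{\ell-1}}] = 0$, and $\eta \not\equiv 1 \pmod p$ finishes the argument with no induction needed. The point is that subtracting $p^\ell$ rather than $p^{\ell-1}$ from $k_{\ell-1}$ keeps $m$ above $k_\ell$, so the expansion never reaches down to $u_1^{k_\ell}$.
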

\begin{proof}
Note that
\[k_{\ell-1}-p^{\ell} = p^{\ell-1}\tr_{\ell}\]
where $\tr_{\ell}=1+(p+p^2)\eta $. Therefore,
\begin{align*}
g_*(u_1^{k_{\ell-1}-p^{\ell}})&=u_1^{k_{\ell-1}-p^\ell}
\left(1+u_1^{p^{\ell-1}}-u_1^{p^{\ell}}+\sum\limits_{i=1}^{p-1}{\frac{1}{p}}{{p}\choose{i}}u_1^{p^{\ell}+(1+i)p^{\ell-1}} + u_1^{2p^{\ell}}\right)^{(p-1)\tr_{\ell}}
\end{align*}
modulo $ (u_1^{ k_{\ell-1}+p^{\ell}+ p^{\ell-1} } )$ and note that 
\[k+1 =k_{\ell-1}+ p^{\ell-1}+2(1+\ldots+p^{\ell-2}) <k_{\ell-1}+p^{\ell-1}+3p^{\ell-2} \leq k_{\ell-1}+2p^{\ell-1}.\] So, using the fact that $(p-1)\tr_\ell = (p-1)+p(p-\eta)$ modulo $(p^2)$,
we simplify as before to obtain
\begin{align*}
g_*(u_1^{k_{\ell-1}-p^{\ell}})&=u_1^{k_{\ell-1}-p^\ell}
(1+u_1^{p^{\ell-1}}-u_1^{p^{\ell}}+u_1^{p^{\ell}+2p^{\ell-1}} )^{(p-1)\tr_{\ell}}\\
&=u_1^{k_{\ell-1}-p^\ell}
(1+(p-1)\tr_\ell(u_1^{p^{\ell-1}}-u_1^{p^{\ell}}+u_1^{p^{\ell}+2p^{\ell-1}} ) + \binom{(p-1)\tr_{\ell}}{2}(u_1^{p^{\ell-1}}-u_1^{p^{\ell}} )^2)\\
&= u_1^{k_{\ell-1}-p^\ell}\big(1 - u_1^{p^{\ell-1}}+u_1^{p^{\ell}}-u_1^{p^{\ell}+2p^{\ell-1}} + \binom{p-1}{2} (u_1^{2p^{\ell-1}} -2 u_1^{p^{\ell} +p^{\ell-1}}) \\
& \ \ \ +\sum_{i=3}^{p+1} \binom{(p-1)+p(p-\eta)}{i} u_1^{i p^{\ell-1}} \big) \\
&= u_1^{k_{\ell-1}-p^\ell} - u_1^{k_{\ell-1}-p^\ell+p^{\ell-1}}+u_1^{k_{\ell-1}}-u_1^{k_{\ell-1}+2p^{\ell-1}} + \binom{p-1}{2} (u_1^{k_{\ell-1}-p^\ell+2p^{\ell-1}} -2 u_1^{k_{\ell-1} +p^{\ell-1}}) \\
& \ \ \ +\sum_{i=3}^{p-1} \binom{p-1}{i} u_1^{k_{\ell-1}-p^\ell+i p^{\ell-1}}   +\sum_{j=0}^{1}  \binom{p-1}{j}\binom{p-\eta}{1} u_1^{k_{\ell-1}+jp^{\ell-1}} 
\end{align*}
As before, noting that $[u_1^n]=0$ if $p+1$ does not divide $n$, while $p+1$ does divide $k_{\ell-1}$, we obtain the following relation in the coinvariants:
\begin{align*}
0&= - [u_1^{k_{\ell-1}-p^\ell+p^{\ell-1}}]+[u_1^{k_{\ell-1}}]-[u_1^{k_{\ell-1}+2p^{\ell-1}}] + \binom{p-1}{2}( [u_1^{k_{\ell-1}-p^\ell+2p^{\ell-1}}] -2 [u_1^{k_{\ell-1} +p^{\ell-1}}]) \\
& \ \ \ +\sum_{i=3}^{p-1} \binom{p-1}{i} [u_1^{k_{\ell-1}-p^\ell+i p^{\ell-1}}]   -\eta \sum_{j=0}^{1}  \binom{p-1}{j}[u_1^{k_{\ell-1}+jp^{\ell-1}}]  \\
&= [u_1^{k_{\ell-1}}] -\eta  [u_1^{k_{\ell-1}}] 
\end{align*}
Since $\eta \neq 1$ modulo $(p)$, we can conclude that  $[u_1^{k_{\ell-1}}] =0$.
\end{proof}

\subsection{The remainder of the argument for the prime two} 
We follow steps similar to those taken at odd primes.
\begin{prop}
Let $k= 3\tr$ where $\tr$ is an integer congruent to $1$ modulo $(4)$. Then $[u_1^{k}]=0$ in $(\FF_{4}[u_1]/(u_1^{k+1}))_{\SS}$.
\end{prop}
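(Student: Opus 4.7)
Following the template of Propositions \ref{prop:stepsodd}--\ref{prop:lastodd}, I would apply $g_*$ with $g=1+S \in \SS$ to the monomial $u_1^{k-2}$ and read off a relation in the coinvariants, then use \fullref{prop:roots} to discard the spurious term. Two preliminary ingredients enable the computation. First, specializing \fullref{thm:t0} to $p=2$ with $g_1=1$ and $g_2=0$ (and noting $u_1^4+u_1^4+u_1^4 \equiv u_1^4 \pmod 2$) gives $t_0 \equiv 1+u_1+u_1^2 \pmod{2, u_1^3}$. Second, because $p-p^p=-2 \equiv 0 \pmod 2$ at the prime $2$, the $t_1$ correction in \eqref{eqn:actgen} vanishes modulo $2$, so $g_*(u_1) \equiv t_0 u_1 \pmod 2$ and hence $g_*(u_1^n) \equiv t_0^n u_1^n \pmod 2$ by multiplicativity.

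Now set $n=k-2$. The hypothesis $\tr \equiv 1 \pmod 4$ forces $k-2 \equiv 1 \pmod 4$, which makes $n$ odd while $\binom{n}{2}=n(n-1)/2$ is even (since $n-1$ is divisible by $4$). Expanding and truncating modulo $(2, u_1^3)$,
$$t_0^{n} \equiv 1 + n u_1 + \left(n+\tbinom{n}{2}\right) u_1^2 \equiv 1 + u_1 + u_1^2,$$
so that
$$g_*(u_1^{k-2}) - u_1^{k-2} \equiv u_1^{k-1} + u_1^k \pmod{2,\, u_1^{k+1}}.$$
Passing to coinvariants yields $[u_1^{k-1}] + [u_1^k]=0$. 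Since $p+1=3$ does not divide $k-1=3\tr-1$, \fullref{prop:roots} gives $[u_1^{k-1}]=0$, and in characteristic two one concludes $[u_1^k]=0$.

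The only substantive arithmetic step is the check that $\binom{k-2}{2}$ is even, and this is exactly what $\tr \equiv 1 \pmod 4$ supplies. For $\tr \equiv 3 \pmod 4$ the same binomial coefficient becomes odd, the $u_1^k$ term in the relation above drops out, and this single application of $g=1+S$ ceases to detect $[u_1^k]$; that residue class must therefore be handled by a separate argument, presumably using a different choice of $g$ (for instance one with $g_2, g_3$ or $g_4$ nonzero, so that the higher-order terms of $t_0$ recorded in \fullref{thm:actp2} come into play). The case at hand, by contrast, is the easiest one at $p=2$ because only the truncation $t_0 \equiv 1+u_1+u_1^2$ is required.
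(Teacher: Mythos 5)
Your proof is correct and follows essentially the same approach as the paper: apply $g=1+S$ to $u_1^{k-2}$, use the truncation $t_0 \equiv 1+u_1+u_1^2 \pmod{2,u_1^3}$ together with $k-2\equiv 1\pmod 4$ to get the relation $[u_1^{k-1}]+[u_1^k]=0$, then kill $[u_1^{k-1}]$ via \fullref{prop:roots}. The only cosmetic differences are that you verify $\binom{k-2}{2}\equiv 0\pmod 2$ directly from $n(n-1)/2$ rather than via Lucas' theorem as the paper does, and that you spell out why $g_*(u_1)=t_0u_1$ at $p=2$, which the paper leaves implicit.
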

\begin{proof}
Choose $g=1+S$ and consider $g_*(u_1^{k-2})$. We have
\begin{align*}
g_*(u_1^{k-2}) &=  t_0^{k-2}u_1^{k-2} \\
&=  u_1^{k-2}(1+ u_1 +u_1^2 )^{k-2} \mod (2, u_1^{k+1}) \\
&=  u_1^{k-2} + u_1^{k-1} +\left(1+ \binom{k-2}{2} \right)u_1^{k} \mod (2, u_1^{k+1}).
\end{align*}
Since $k-2 = 1$ modulo $(4)$, we have
\begin{align*}
\binom{k-2}{2}  = \binom{1}{0} \binom{0}{1} \ldots  = 0 \mod (2).
\end{align*}
So, $[u_1^{k-1}]= [u_1^{k}]$ in $(\FF_{4}[u_1]/(u_1^{k+1}))_{\SS}$. 
Since $k-1\neq 0$ modulo $(3)$, $[u_1^{k-1}]$ is zero in $(\FF_{4}[u_1]/(u_1^{k+1}))_{\SS}$ by \fullref{prop:roots} and the claim follows.
\end{proof}

Now, we let $k= 3(1 + 2 + 2^2+ \ldots + 2^{\ell-1} + 2^{\ell+1} \eta)$ for $\eta$ any non-negative integer and $\ell\geq 2$, we write
\begin{equation}\label{eqn:krtwo}
k_{r} = \begin{cases} k  & r=0 \\
k_{r-1}-3 \cdot 2^{r-1} & 1\leq r \leq \ell-2.
\end{cases} \end{equation}
We prove that $[u_{1}^{k_r}] =  [u_1^{k_{r+1}}]$ for $0\leq r < \ell-2$ (\fullref{prop:stepstwo}) and that $[u_{1}^{k_{\ell-2}}] =  0$ (\fullref{prop:finaltwo}) in $(\FF_{4}[u_1]/(u_1^{k+1}))_{\SS}$. Together, these results finish the proof of \fullref{prop:keyres}.

\begin{prop}\label{prop:stepstwo}
Let $k_r$ be as in \eqref{eqn:krtwo}. For $0\leq r < \ell-2$, 
\[[u_{1}^{k_r}] =  [u_1^{k_{r+1}}]\]
in $(\FF_{4}[u_1]/(u_1^{k+1}))_{\SS}$.
\end{prop}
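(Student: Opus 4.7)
My plan is to mimic \fullref{prop:stepsodd} using the element $g = 1 + \omega S^2 \in \SS$, where $\omega \in \FF_4$ is a primitive cube root of unity, and the test exponent $n := k_{r+1} - 3 \cdot 2^r$. Since $g_2 = \omega$ and $g_3 = g_4 = 0$, \fullref{thm:actp2} together with $\omega + \omega^2 = 1$ in $\FF_4$ gives
\[ t_0 \equiv 1 + u_1^3 \pmod{(2, u_1^{10})}. \]
At $p = 2$ the term $t_0^{-1} t_1(p - p^p)$ in \eqref{eqn:actgen} carries a factor of $-2$, so modulo $2$ the action reduces to the ring map $g_*(u_1^n) = t_0^n u_1^n$. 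For $r \leq \ell - 3$, the factor $2^{\ell - r} - 3 + 2^{\ell+1-r}\eta$ is positive, so $n = 3 \cdot 2^r(2^{\ell - r} - 3 + 2^{\ell+1-r}\eta) > 0$. Reducing $n/2^r$ modulo $8$ and using $\ell - r \geq 3$ one finds $n/2^r \equiv -9 \equiv 7 \pmod 8$, so bits $r$, $r+1$, $r+2$ of $n$ are set. In particular $\min I = r$, where $I$ is the set of positions of the $1$-bits of $n$.

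The key step is Frobenius: in characteristic $2$, $t_0^{2^s} \equiv 1 + u_1^{3 \cdot 2^s} \pmod{u_1^{10 \cdot 2^s}}$. Writing $n = \sum_{i \in I} 2^i$, the product $t_0^n = \prod_{i \in I} t_0^{2^i}$ reduces modulo $u_1^{k+1-n}$ to
\[ t_0^n \equiv \sum_{J \subseteq I} u_1^{3\sigma(J)}, \qquad \sigma(J) := \sum_{j \in J} 2^j, \]
because $k + 1 - n = 9 \cdot 2^r - 2 < 10 \cdot 2^r = 10 \cdot 2^{\min I}$ lies below every error term in the expansion. Among non-empty $J$ with $\sigma(J) \leq 3 \cdot 2^r - 1$, the only admissible choices are $J = \{r\}$ and $J = \{r+1\}$: a singleton $\{j\} \subseteq I$ with $j \geq r + 2$ gives $\sigma \geq 4 \cdot 2^r$, and any $J \subseteq I \subseteq \{r, r+1, r+2, \ldots\}$ of size at least $2$ has $\sigma \geq 2^r + 2^{r+1} = 3 \cdot 2^r$.

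Multiplying through by $u_1^n$ then yields $g_*(u_1^n) \equiv u_1^n + u_1^{k_{r+1}} + u_1^{k_r} \pmod{(2, u_1^{k+1})}$, so in coinvariants $[u_1^{k_r}] + [u_1^{k_{r+1}}] = 0$, which in characteristic $2$ is the claim. The main obstacle I anticipate is the Frobenius bookkeeping: one must verify that the partial information $t_0 \equiv 1 + u_1^3 \pmod{u_1^{10}}$ from \fullref{thm:actp2} suffices to compute $t_0^n$ to the required accuracy even when $n$ is very large. This is exactly why the inequality $k + 1 - n < 10 \cdot 2^{\min I}$ matters, and why pinning down the binary expansion of $n$ to guarantee $\min I = r$ is the crucial combinatorial point.
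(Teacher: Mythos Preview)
Your argument is correct and follows essentially the same route as the paper: both use the test exponent $n = k_{r+1} - 3\cdot 2^r$ and a group element with $g_2$ a primitive cube root of unity so that $t_0 \equiv 1 + u_1^3 + \cdots$, arriving at the identical relation $g_*(u_1^n) \equiv u_1^n + u_1^{k_{r+1}} + u_1^{k_r}$. Your choice $g = 1 + \omega S^2$ is in fact slightly cleaner than the paper's $g = 1 + \zeta S^2 + \zeta S^4$ (their extra $\zeta S^4$ contributes a $u_1^9$ to $t_0$ that is immediately discarded), and your bit-expansion bookkeeping replaces their binomial-coefficient computation, but these are cosmetic differences.
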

\begin{proof}
Take $g = 1+\zeta S^2 + \zeta S^4$. Note that $\zeta+\zeta^2 =1$ modulo $(2)$, so by \fullref{thm:actp2},
\begin{align*}
g_*(u_1^{k_{r+1}-3\cdot 2^{r}}) &= u_1^{k_{r+1}-3\cdot 2^{r}} (1+u_1^3+u_1^9)^{k_{r+1}-3\cdot 2^{r}}  \mod (2, u_1^{2^{r+3}+k_{r+1}- 2^{r}})
\end{align*}
Note that since
\[k+1-k_{r+1} = 3(1+2+\ldots+2^r)+1 =3 \cdot 2^{r+1}-2\]
and $2^{r+3}-2^r >3 \cdot 2^{r+1}-2$, we have that $2^{r+3}+k_{r+1}- 2^{r} \geq k+1$. So modulo $(u_1^{k+1})$,
\begin{align*}
g_*(u_1^{k_{r+1}-3\cdot 2^{r}}) 
 &= u_1^{k_{r+1}-3\cdot 2^{r}} (1+u_1^{3 \cdot 2^r}+u_1^{9\cdot 2^r})^{2^{-r}k_{r+1}-3} \\
 &= \sum_{i=0}^{2^{-r}k_{r+1}-3}\binom{2^{-r}k_{r+1}-3}{i} u_1^{k_{r+1}-3\cdot 2^{r}}  (u_1^{3 \cdot 2^r}+u_1^{9\cdot 2^r})^{i} \\
    &= \sum_{i=0}^{2^{-r}k_{r+1}-3}\sum_{j=0}^{i} \binom{2^{-r}k_{r+1}-3}{i} \binom{i}{j} u_1^{k_{r+1} + 3 \cdot 2^{r+1}j+3 \cdot 2^r(i-1)}  \ . \\
\intertext{Modulo $(u_1^{k+1})$, only terms with $j=0$ and $i< 3$ contribute to the sum. So}
  &=  \binom{2^{-r}k_{r+1}-3}{0}  u_1^{k_{r+1} -3 \cdot 2^r}+ \binom{2^{-r}k_{r+1}-3}{1}  u_1^{k_{r+1} } +\binom{2^{-r}k_{r+1}-3}{2}   u_1^{k_{r+1} +3 \cdot 2^r} \\
    &=   u_1^{k_{r+1} -3 \cdot 2^r}+ u_1^{k_{r+1} } +\binom{2^{-r}k_{r+1}-3}{2}   u_1^{k_{r}} \ . 
    \end{align*}
Finally, since $2^{-r} k_{r+1}-3  = 3$ modulo $(4)$, then $\binom{2^{-r}k_{r+1}-3}{2} = 1$ modulo $(2)$. So we conclude that 
\[ g_*(u_1^{k_{r+1}-3\cdot 2^{r}})  = u_1^{k_{r+1} -3 \cdot 2^r}+ u_1^{k_{r+1} } +  u_1^{k_{r}} \mod (2, u_1^{k+1}).\]
Therefore, $[u_1^{k_{r+1} }] = [ u_1^{k_{r}} ]$ as desired.
\end{proof}

\begin{prop}\label{prop:finaltwo}
Let $k_{\ell-2}$ be as in \eqref{eqn:krtwo}. Then $[u_{1}^{k_{\ell-2}}] =  0$ in $(\FF_{4}[u_1]/(u_1^{k+1}))_{\SS}$.
\end{prop}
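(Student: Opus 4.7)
The plan is to extend the chain of coinvariant equalities in \fullref{prop:stepstwo} one step further and then directly kill the new terminal class. Set $k_{\ell-1} := k_{\ell-2} - 3\cdot 2^{\ell-2} = 3\cdot 2^{\ell-1}(1+4\eta)$. First, I will observe that the computation in the proof of \fullref{prop:stepstwo} applies \emph{verbatim} at $r = \ell - 2$: one has $2^{-(\ell-2)}k_{\ell-1} - 3 = 3 + 24\eta \equiv 3 \pmod 4$ for every $\eta \geq 0$, so $\binom{2^{-(\ell-2)}k_{\ell-1}-3}{2} \equiv 1 \pmod 2$, and applying $g = 1 + \zeta S^2 + \zeta S^4$ to $u_1^{k_{\ell-1} - 3\cdot 2^{\ell-2}}$ yields the coinvariant relation $[u_1^{k_{\ell-2}}] = [u_1^{k_{\ell-1}}]$.

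To prove $[u_1^{k_{\ell-1}}] = 0$, I take $g = 1 + S$ and apply $g_*$ to $u_1^N$ with $N := k_{\ell-1} - 2^\ell = 2^{\ell-1}(1 + 12\eta)$. By \fullref{thm:t0}, $t_0 \equiv 1 + u_1 + u_1^2 + u_1^4 \pmod{(2, u_1^5)}$, so setting $b = u_1^{2^{\ell-1}}$ and using $(x+y)^{2^r} = x^{2^r} + y^{2^r}$ in characteristic $2$ gives $t_0^{2^{\ell-1}} \equiv 1 + b + b^2 + b^4 \pmod{b^5}$. A direct computation shows $(1 + b + b^2 + b^4)^{12} \equiv 1 + b^4 \pmod{b^5}$, hence
\[
t_0^N = (t_0^{2^{\ell-1}})^{1 + 12\eta} \equiv 1 + b + b^2 + (1+\eta)b^4 \pmod{(2, b^5)}.
\]
This precision is enough, since $k + 1 - N = 5\cdot 2^{\ell-1} - 2$. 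Multiplying by $u_1^N$, the nontrivial part of $g_*(u_1^N) - u_1^N$ is supported at exponents $k_{\ell-1} - 2^{\ell-1}$, $k_{\ell-1}$, and $k_{\ell-1} + 2^\ell$; of these, only $k_{\ell-1}$ is divisible by $3$, so \fullref{prop:roots} kills the other two classes and the relation in the coinvariants collapses to $[u_1^{k_{\ell-1}}] = 0$. Combined with the first step, $[u_1^{k_{\ell-2}}] = 0$.

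The main obstacle is that the $(1-\eta)$-trick of \fullref{prop:lastodd}---whose effectiveness relies on $\eta \not\equiv 1 \pmod p$ being a unit---is unavailable at $p = 2$, since no congruence restriction on $\eta$ is imposed in the statement. The specific choice $N = k_{\ell-1} - 2^\ell$ is engineered so that, uniformly in $\eta$, every exponent arising in $g_*(u_1^N)$ other than $k_{\ell-1}$ itself fails to be a multiple of $3$ and is therefore handled by \fullref{prop:roots}; the $\eta$-dependent coefficient $(1+\eta)$ of $b^4$ becomes irrelevant.
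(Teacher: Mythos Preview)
Your second step—applying $g=1+S$ to $u_1^{N}$ with $N=2^{\ell-1}(1+12\eta)$—is exactly the computation the paper performs in its proof of \fullref{prop:finaltwo}, and you reach the same expansion
\[
g_*(u_1^{N}) \equiv u_1^{N}+u_1^{N+2^{\ell-1}}+u_1^{N+2^{\ell}}+(1+\eta)\,u_1^{N+2^{\ell+1}} \pmod{(2,u_1^{k+1})},
\]
with the only multiple-of-$3$ exponent being $N+2^{\ell}=3\cdot 2^{\ell-1}(1+4\eta)$.

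Your first step, however, is not a mere stylistic addition: it repairs a genuine slip in the paper. The paper asserts $k_{\ell-2}=3(2^{\ell-1}+2^{\ell+1}\eta)$, but from \eqref{eqn:krtwo} one actually has
\[
k_{\ell-2}=k-3(2^{\ell-2}-1)=3\bigl(2^{\ell-2}+2^{\ell-1}+2^{\ell+1}\eta\bigr),
\]
which differs from the paper's value by $3\cdot 2^{\ell-2}$. Thus the class the paper's computation kills is your $[u_1^{k_{\ell-1}}]$, one step below $[u_1^{k_{\ell-2}}]$. Your observation that the proof of \fullref{prop:stepstwo} extends verbatim to $r=\ell-2$---the congruence $2^{-(\ell-2)}k_{\ell-1}-3=3+24\eta\equiv 3\pmod 4$ holds and the degree bound $2^{\ell+1}+k_{\ell-1}-2^{\ell-2}=k+1+2^{\ell-2}+1>k+1$ is satisfied---provides the missing link $[u_1^{k_{\ell-2}}]=[u_1^{k_{\ell-1}}]$. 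So the two arguments share the same computational core, but yours is the complete one.
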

\begin{proof}
Choose $g=1+S$ and consider $g_*(u_1^{2^{\ell-1} + 3 \cdot 2^{\ell+1}\eta})$. We have
\begin{align*}
g_*(u_1^{2^{\ell-1} + 3 \cdot 2^{\ell+1}\eta}) &= u_1^{2^{\ell-1} + 3 \cdot 2^{\ell+1}\eta}(1+ u_1 +u_1^2+ u_1^4 )^{2^{\ell-1} + 3 \cdot 2^{\ell+1}\eta} \mod (2,u_1^{3(2^{\ell}    +   2^{\ell+1}\eta )} )
\end{align*}
noting that $3(2^{\ell}    +   2^{\ell+1}\eta ) \geq k+1$.
Therefore, modulo $(u_1^{k+1})$, we have
\begin{align*}
g_*(u_1^{2^{\ell-1} + 3 \cdot 2^{\ell+1}\eta})  &= u_1^{2^{\ell-1} + 3 \cdot 2^{\ell+1}\eta}(1+ u_1^{2^{\ell-1}} +u_1^{2^{\ell}}+ u_1^{2^{\ell+1}} )^{1+ 3 \cdot 2^{2}\eta}  \\
&= u_1^{2^{\ell-1} + 3 \cdot 2^{\ell+1}\eta}\sum_{s=0}^{1+ 3 \cdot 2^{2}\eta} \sum_{i=0}^{s} \sum_{j=0}^{i} \binom{1+ 3 \cdot 2^{2}\eta}{s}\binom{s}{i}   \binom{i}{j} u_1^{2^{\ell-1}(s-i)} u_1^{2^{\ell}(i-j)} u_1^{2^{\ell+1}j} \\
&= \sum_{s=0}^{1+ 3 \cdot 2^{2}\eta} \sum_{i=0}^{s} \sum_{j=0}^{i} \binom{1+ 3 \cdot 2^{2}\eta}{s}\binom{s}{i}   \binom{i}{j} u_1^{2^{\ell-1}(1 +s+i+2j)+ 3 \cdot 2^{\ell+1}\eta} .
\end{align*}
Note that if $ s+i+2j \geq 5$, the terms vanish for degree reasons. Hence, $ s \leq 4$, $ i \leq 4-s$, and $ j \leq 2-(s+i)/2$, so that
\begin{align*}
g_*(u_1^{2^{\ell-1} + 3 \cdot 2^{\ell+1}\eta})  &= \sum_{s=0}^{4} \sum_{i=0}^{\max(s, 4-s)} \sum_{j=0}^{\max(i, 2-(s+i)/2)} \binom{1+ 3 \cdot 2^{2}\eta}{s}\binom{s}{i}   \binom{i}{j} u_1^{2^{\ell-1}(1 +s+i+2j)+ 3 \cdot 2^{\ell+1}\eta} .
\end{align*}
Since $1+3 \cdot 2^2 \eta= 1$ modulo $(4)$, $ \binom{1+ 3 \cdot 2^{2}\eta}{2} = \binom{1+ 3 \cdot 2^{2}\eta}{3} = 0$ modulo $(2)$. Further, $\binom{1+ 3 \cdot 2^{2}\eta}{4}  = \eta$ modulo $(2)$. Enumerating the remaining possibilities gives
\begin{align*}
g_*(u_1^{2^{\ell-1} + 3 \cdot 2^{\ell+1}\eta})  &= u_1^{2^{\ell-1}+ 3 \cdot 2^{\ell+1}\eta} + u_1^{2^{\ell}+ 3 \cdot 2^{\ell+1}\eta} + u_1^{3( 2^{\ell-1}+  2^{\ell+1}\eta)} +(1+\eta) u_1^{3 ( 2^{\ell}+ 2^{\ell+1}\eta) -2^{\ell-1}} 
\end{align*}
and this holds modulo $ (2, u_1^{k+1})$. Therefore, since $k_{\ell-2} =3 (2^{\ell-1}+   2^{\ell+1}\eta)$, we have
\[ [u_1^{k_{\ell-2}} ] = [u_1^{2^{\ell}+3 \cdot 2^{\ell+1}\eta} ] +(1+\eta)[u_1^{3 ( 2^{\ell} +  2^{\ell+1}\eta)-2^{\ell-1}}]  \] in $(\FF_{4}[ u_1]/(u_1^{k+1}))_{\SS}$. However, the right hand terms are zero by \fullref{prop:roots}, which proves the claim.
\end{proof}

\section{The action of the Morava stabilizer group}\label{sec:lader}

In this section, we continue to abbreviate $R=R_2$ and $\SS= \SS_2$. Here, we follow the derivation of the formula for the universal deformation $F(x,y)$ and the resulting formulas for the action of $\SS$ on $R$ as outlined in the doctoral thesis of Lader \cite{lader}. Note that the methods of \cite{lader} are a generalization of the techniques of \cite[Section 4.1]{hkm} at primes $p\geq 5$. We claim no originality, but include the computations we need here. One reason for this is that the doctoral thesis is only available in French. We have also decided to add more details to the proofs and have taken the opportunity to note that, with one minor adjustment (\fullref{thm:include2}), the results generalize to the case $p=2$.

\subsection{The universal deformation} \label{sec:unidefcomp}
We start by fixing a prime $p$. Let
\[V \cong \Z_{(p)}[v_1,v_2, \ldots]\]
and $G(x,y)$ be the universal $p$-typical formal group law defined over $V$. We choose to work with the Araki generators, which can be described as follows.
Let $\ell_0=1$,
\[	\log_G (x) =   \sum_{i\geq 0} \ell_i x^{p^i} \]
and $\exp_G(x)$ be the formal power series inverse of $\log_G(x)$ under composition. The Araki generators $v_i$'s are then determined by the recursive formula
\[p \ell_n = \sum_{0 \leq i \leq n} \ell_i v_{n-i}^{p^i} \]
where by convention $v_0 =p$. 
The universal $p$-typical formal group law is computed as
\[G(x,y) = \exp_G(\log_G(x)+\log_G(y))\]
and the Araki generators have the property that 
\[[p]_G(x) = \sum_{i\geq 0}{}^{G} v_i x^{p^i}\]
where $[p]_G(x)$ is the $p$-series for $G(x,y)$.

The formal group law $F(x,y)$ over $R$ is obtained from $G(x,y)$ as follows. Consider the 
ring homomorphism
\[\varphi \colon V \to R\]
determined by $\varphi(v_1) = u_1 $, $\varphi(v_2) = 1$ and $\varphi(v_n) =0$ for $n>2$. The formal group law $F(x,y)$ is defined by
\[F(x,y) = \varphi_*G(x,y).\]
It follows that 
\[	\log_F (x) =   \sum_{i\geq 0} L_i x^{p^i} \]
for $L_i = \varphi(\ell_i)$
and that
\[[p]_F(x) = px +_{F} u_1 x^p +_{F} x^{p^2}.\]
We record that
\begin{equation}\label{eq:Ls}	L_1 = \frac{u_1}{p-p^p}, \qquad L_2 = \frac{ \left( 1 + \frac{u_1^{p+1}}{p-p^p} \right)}{p-p^{p^2}}, \qquad L_3=\frac{\frac{u_1}{p-p^p}+\frac{u_1^{p^2}}{p-p^{p^2}}+\frac{u_1^{p^2+p+1}}{\left(p-p^p\right) \left(p-p^{p^2}\right)}}{p-p^{p^3}}. \end{equation}

The goal of this section is to approximate $F(x,y)$. From now on, we let $\log(x) =\log_F(x)$ and $\exp(x) =\exp_F(x)$ so that
\[F(x,y) = \exp(\log(x)+\log(y)).\]
We will prove the following result, which is \cite[Lemma 3.1]{lader}.
\begin{theorem}[Lader]\label{thm:Fform} Let $p$ be any prime. Modulo $(x,y)^{p^2+1}$, the formal group law $F(x,y)$ satisfies
	\[	F(x,y) = x + y - \frac{u_1}{1-p^{p-1}} C_p (x,y) - \sum_{i=1}^p u_1^{i+1} P_{p+i(p-1)} (x,y) - \frac{1}{1-p^{p^2-1}}\left( 1 + \frac{u_1^{p+1}}{p - p^p} \right) C_{p^2} (x,y),\]
where
	\[	C_{p^k} (x,y) = \frac{1}{p} \big( (x+y)^{p^k} - x^{p^k} - y^{p^k} \big) \]
and 
	\[	P_{p+i(p-1)} (x,y) = \frac{1}{(p - p^p)^{i+1}} \sum_{j=0}^i \frac{(-1)^j}{j+1} \binom{p(j+1)}{j} \binom{j(p-1)+p}{i-j} (x^p + y^p)^{i-j} (x+y)^{p+pj-i}. \]
\end{theorem}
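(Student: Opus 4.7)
The plan is to expand $F(x,y) = \exp_F(\log_F x + \log_F y)$ modulo $(x,y)^{p^2+1}$ by using its characterization via the functional equation $\log_F F(x,y) = \log_F(x) + \log_F(y)$. Setting $\Delta := F - x - y$, this rearranges to
\[
\Delta \;=\; -\sum_{k\geq 1} L_k\bigl(F^{p^k} - x^{p^k} - y^{p^k}\bigr).
\]
First I would show that modulo $(x,y)^{p^2+1}$ only the $k=1,2$ terms survive: the $k\geq 3$ contributions are killed since $F^{p^3}, x^{p^3}, y^{p^3}$ all have minimal degree $>p^2$. For $k=2$, any $\Delta$-factor in $F^{p^2} = ((x+y)+\Delta)^{p^2}$ pushes the total degree up by at least $p-1$ (because $\Delta$ starts in degree $\geq p$), so $F^{p^2} \equiv (x+y)^{p^2}$ and the $k=2$ term collapses to $-L_2\bigl((x+y)^{p^2} - x^{p^2} - y^{p^2}\bigr) = -pL_2\, C_{p^2}$; substituting $pL_2 = \tfrac{1}{1-p^{p^2-1}}\bigl(1+\tfrac{u_1^{p+1}}{p-p^p}\bigr)$ from \eqref{eq:Ls} recovers the final summand in the theorem.

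This reduces matters to the equation
\[
\Delta + L_1 p\, C_p + L_1 \sum_{k=1}^{p} \binom{p}{k}(x+y)^{p-k}\Delta^k + pL_2\, C_{p^2} \;\equiv\; 0 \pmod{(x,y)^{p^2+1}},
\]
where I used $C_p = \tfrac{1}{p}\bigl((x+y)^p - x^p - y^p\bigr)$ and the binomial theorem for $(x+y+\Delta)^p$. At degree $p$ only the first two terms contribute, yielding $\Delta_p = -L_1 p\, C_p = -\tfrac{u_1}{1-p^{p-1}} C_p$, which is the first nontrivial term in the claim. A degree count shows that $(x+y)^{p-k}\Delta^k$ sits in degree $p + (k+s)(p-1)$, where $s = \sum_j i_j$ indexes the degree-$p+i_j(p-1)$ pieces of the factors of $\Delta^k$. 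Consequently $\Delta_d = 0$ unless $d = p + i(p-1)$ for some $0 \leq i \leq p$, which matches the index set in the theorem's middle sum exactly. I would then solve inductively for each $\Delta_{p+i(p-1)}$ ($i = 1, \ldots, p$) by equating coefficients at the corresponding degrees and plugging in the previously-determined lower-degree pieces.

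The main technical obstacle is extracting the explicit closed form for $P_{p+i(p-1)}$ from this recursion. Because each $\Delta_{p+i'(p-1)}$ is a scalar multiple (in $u_1$ and a power of $(p-p^p)^{-1}$) of a polynomial in $(x+y)$ and $(x^p+y^p)$, the right-hand side of the equation at degree $p+i(p-1)$ is an explicit polynomial in these quantities; after expanding $((x+y)^p - x^p - y^p)^{j+1}$ via the binomial theorem and reindexing with $j = k-1$, the result should collapse to a linear combination of the $(x^p+y^p)^{i-j}(x+y)^{p+pj-i}$ with coefficient $\tfrac{(-1)^j}{j+1}\binom{p(j+1)}{j}\binom{j(p-1)+p}{i-j}$. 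Verifying the combinatorial identity behind this collapse --- in particular how the factor $\tfrac{1}{j+1}\binom{p(j+1)}{j}$ emerges from iteratively substituting $\Delta_p = -L_1 p\, C_p$ into the binomial expansion of $(x+y+\Delta)^p - x^p - y^p$ --- is the delicate step where the bulk of the computation lies.
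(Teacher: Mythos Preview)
Your approach via the functional equation $\log_F F = \log_F x + \log_F y$ is set up correctly and is genuinely different from the paper's. The paper does not solve for $\Delta$ recursively; instead it first computes $\exp_F(x)$ modulo $(x^{p^2+1})$ in closed form using the Lagrange inversion formula, obtaining
\[
\exp_F(x) \;=\; x - L_1 x^p\sum_{j=0}^{p}\frac{(-1)^j}{j+1}\binom{p(j+1)}{j}(L_1 x^{p-1})^j - L_2 x^{p^2},
\]
and then substitutes $\log_F x + \log_F y$ into this expression. After a binomial expansion and a reindexing of the resulting double sum, the $P_{p+i(p-1)}$ fall out directly. The coefficients $\frac{(-1)^j}{j+1}\binom{p(j+1)}{j}$ you are hoping to see emerge from your recursion are precisely the Lagrange inversion coefficients for $\exp_F$.

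Your truncation arguments for $k\geq 3$ and $k=2$, the identification of the $C_{p^2}$ term, and the degree filtration showing $\Delta$ is supported in degrees $p+i(p-1)$ are all correct. The gap is exactly the step you flag as ``delicate'': you have not actually extracted the closed form of $P_{p+i(p-1)}$ from the recursion, and doing so is not a routine reindexing. Each $\Delta_{p+i(p-1)}$ depends on products of \emph{all} lower $\Delta_{p+i'(p-1)}$, not just $\Delta_p$, so the iterated substitution you sketch must track increasingly many cross-terms; verifying that these collapse to the stated single sum over $j$ essentially amounts to rederiving the Lagrange inversion formula by hand. The paper's route is cleaner precisely because Lagrange inversion delivers those coefficients in one stroke, reducing the remainder of the proof to a straightforward substitution.
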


We begin with some preliminary results.

\begin{theorem}\label{thm:expseries} Let $p$ be any prime. Given 
		\[	\log(x) = x + L_1 x^p + L_2 x^{p^2} \mod (x^{p^3}) \]
	the inverse series is given by
				\[ \exp(x) =  x - L_1 x^p \left( \sum_{j=0}^p \frac{(-1)^{j}}{j+1} \binom{p(j+1)}{j}\left( L_1 x^{p-1} \right)^j \right) - L_2 x^{p^2} \mod (x^{p^2+1}) \]
\end{theorem}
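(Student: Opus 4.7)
The plan is to apply the Lagrange inversion formula. Write $\log(x) = x \cdot u(x)$ where
\[u(x) = 1 + L_1 x^{p-1} + L_2 x^{p^2-1} \mod (x^{p^3-1}),\]
so $u(0) = 1$. Lagrange inversion then gives, for each $n \geq 1$,
\[[x^n]\exp(x) = \frac{1}{n}\,[y^{n-1}]\, u(y)^{-n}.\]
Since we only need $\exp(x)$ modulo $x^{p^2+1}$, it suffices to compute this for $1 \leq n \leq p^2$.

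Expanding
\[u(y)^{-n} = \sum_{k,m \geq 0} \binom{-n}{k+m}\binom{k+m}{k} L_1^k L_2^m y^{k(p-1) + m(p^2-1)},\]
the coefficient of $y^{n-1}$ is a finite sum over pairs $(k,m)$ with $k(p-1) + m(p^2-1) = n-1$. Since the left-hand side is always a multiple of $p-1$, the only exponents $n$ that contribute lie in the arithmetic progression $n = 1 + s(p-1)$; for $0 \leq s \leq p+1$ these are precisely the degrees $1, p, 2p-1, 3p-2, \ldots, p^2$ appearing in the target formula.

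For $n = p + j(p-1)$ with $0 \leq j \leq p-1$, the constraint rewrites as $k + m(p+1) = j+1$, which forces $m=0$ and $k = j+1$ because $j+1 \leq p < p+1$. Hence $[y^{n-1}]u(y)^{-n} = \binom{-n}{j+1} L_1^{j+1}$. For $n = p^2$ there are two contributing pairs, $(k,m) = (p+1,0)$ and $(0,1)$, producing
\[\binom{-p^2}{p+1} L_1^{p+1} + \binom{-p^2}{1} L_2 = \binom{-p^2}{p+1} L_1^{p+1} - p^2 L_2.\]

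It then remains to verify the combinatorial identity
\[\frac{1}{n}\binom{-n}{j+1} = \frac{(-1)^{j+1}}{j+1}\binom{p(j+1)}{j}, \qquad n = p + j(p-1),\]
by rewriting the left-hand side as $\frac{(-1)^{j+1}(n+1)(n+2)\cdots(n+j)}{(j+1)!}$ and using $n+j = p(j+1)$ to recognize the numerator as $(p(j+1))!/(p(j+1)-j)!$. This gives the $j$-th summand of the stated formula for $0 \leq j \leq p-1$, and applied at $j=p$ handles the $L_1^{p+1}$ contribution at degree $p^2$; the additional $(k,m) = (0,1)$ term then supplies the explicit $-L_2 x^{p^2}$. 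I expect no real obstacle here: the argument is purely combinatorial, valid over any $\Z_{(p)}$-algebra into which $L_1, L_2$ map, and uniform in $p$ (including $p = 2$), since Lagrange inversion introduces no denominators other than $n$ itself.
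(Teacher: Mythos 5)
Your proof is correct and is essentially the same as the paper's: both apply Lagrange inversion, reduce the coefficient of $y^{n-1}$ to the Diophantine constraint $k(p-1) + m(p^2-1) = n-1$ whose only admissible solutions for $n \leq p^2$ are the ones you list, and conclude with the same simplification $\frac{1}{n}\binom{-n}{j+1} = \frac{(-1)^{j+1}}{j+1}\binom{p(j+1)}{j}$ for $n = p + j(p-1)$. The only cosmetic difference is that you state Lagrange inversion in the compact form $[x^n]\exp(x) = \frac{1}{n}[y^{n-1}]\,u(y)^{-n}$ and then expand multinomially, whereas the paper begins directly from the fully expanded coefficient formula $b_n = \frac{1}{n}\sum(-1)^{\sum c_i}\frac{n(n+1)\cdots(n-1+\sum c_i)}{c_1!c_2!\cdots}a_2^{c_1}a_3^{c_2}\cdots$, which is the same expansion.
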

\begin{proof}  First, we recall the Lagrange inversion formula for the inverse of a formal 	power series.  The formula states that given a formal power series
		\[	f(x) = a_1 x + a_2 x^2 + a_3 x^3 + \cdots \]
	the inverse series is given by
		\[	f^{-1}(x) = b_1 x+ b_2 x^2 + b_3 x^3  + \cdots \]
	where $b_1 = \frac{1}{a_1}$ and for $n > 1$ we have
		\[	b_n = \frac{1}{n a_1^n} \sum_{c_1, c_2, c_3, \ldots} (-1)^{c_1+c_2+c_3 + \cdots} \frac{n(n+1)\cdots (n-1+c_1 + c_2  +c_3+ \cdots)}{c_1! c_2 ! c_3 ! \cdots} \left( \frac{a_2}{a_1} \right)^{c_1} \left( \frac{a_3}{a_1} \right)^{c_2} \left( \frac{a_4}{a_1} \right)^{c_3} \cdots, \]
	with the sum taken over $c_1, c_2, c_3, \ldots$ such that
		\[	c_1 + 2c_2 + 3c_3 + \cdots = n-1. \]
	In the current case, $f(x) = \log(x) = x + L_1 x^p + L_2 x^{p^2}$ modulo $(x^{p^3})$, so we have coefficients $a_n$ given by
		\[	a_n = \begin{cases}1 & \text{if } n = 1 \\
			L_1 & \text{if } n = p \\
			L_2 & \text{if } n = p^2 \\
			0 & \text{otherwise}. \end{cases} \]
	The Lagrange inversion formula for the coefficients of $\exp(x)$ then simplifies to
		\[	b_n = \frac{1}{n} \sum_{c_1, c_2, c_3, \ldots} (-1)^{c_1 + c_2 + c_3 + \cdots} \frac{n(n+1) \cdots (n-1+c_1 + c_2 + c_3 + \cdots )}{c_1! c_2! c_3 ! \cdots} a_2^{c_1} a_3^{c_2} a_4^{c_3} \cdots. \]
	But since $a_i = 0$ unless $i = 1, p$ or $p^2$, the terms in the sum will vanish if the exponent on any of these terms is nonzero.  Hence, the only nonzero $c_i$ that will contribute to the sum are those for which $i = p - 1$ and $i = p^2 - 1$.  Hence, 
		\[	b_n = \frac{1}{n} \sum_{c_{p-1}, c_{p^2-1}} (-1)^{c_{p-1} + c_{p^2-1}} \frac{n(n+1) \cdots (n-1 + c_{p-1} + c_{p^2-1})}{c_{p-1}! c_{p^2-1}!} a_p^{c_{p-1}} a_{p^2}^{c_{p^2-1}}\]
	where
		\[	(p-1)c_{p-1} + (p^2-1)c_{p^2-1} = n-1. \]
	We consider $\exp(x)$ modulo $(x^{p^2+1})$, so we only need to compute the coefficients $b_n$ up to $n = p^2$.  Therefore, the only solutions of $(p-1)c_{p-1} + (p^2-1)c_{p^2-1} = n-1$ are 
		\[	c_{p-1} = i, \ c_{p^2-1} = 0 \qquad \text{for \ \ } i = 1, 2, \ldots, p+1\]
	when $n =  i  (p-1) + 1$ together with
		\[	c_{p-1} = 0, \ c_{p^2-1} = 1 \]
	when $n = p^2$. Hence,
		\[	b_n = \begin{dcases}
				\frac{(-1)^{i}}{  i  (p-1) + 1} \binom{pi}{i} L_1^{i} & \text{if } n = i  (p-1)+1 \ \text{ for } i = 1, \ldots, p \\
				\frac{(-1)^{p+1}}{p^2} \binom{p(p+1)}{(p+1)} L_1^{p+1} - L_2 & \text{if } n = p^2 \\
				0 & \text{otherwise}.
			\end{dcases}
			\]

It follows that
			\begin{align*}	
			\exp(x) &= x + \left(\sum_{i=1}^{p+1}\frac{(-1)^{i}}{  i  (p-1) + 1} \binom{pi}{i} L_1^{i}x^{ i  (p-1)+1} \right) - L_2 x^{p^2} \\
			&= x + \left(\sum_{i=1}^{p+1}\frac{(-1)^{i}}{  i  } \binom{pi}{i-1} L_1^{i}x^{ i  (p-1)+1} \right) - L_2 x^{p^2}  
		\end{align*}
		Letting $j = i-1$ gives the formula.
\end{proof}

Now that we have formulas for both $\log(x)$ and $\exp(x)$ we can apply them to compute $F(x,y) = \exp(\log(x) + \log(y))$ and prove \fullref{thm:Fform}.
\begin{proof}[Proof of \fullref{thm:Fform}]  
	First, we consider the middle term $ \sum_{j=0}^p \frac{(-1)^j}{j+1} \binom{p(j+1)}{j}L_1^{j+1} x^{j(p-1)+p}$ of \fullref{thm:expseries}.  Evaluating at $\log (x) + \log (y)$, modulo $(x,y)^{p^2+1}$ we have
	\begin{align*}
		& \sum_{j=0}^p \frac{(-1)^j}{j+1} \binom{p(j+1)}{j} L_1^{j+1} \big( (x+y) + L_1 (x^p + y^p)\big)^{j(p-1)+p} \\
		&= \sum_{j=0}^p \sum_{k=0}^{j(p-1)+p} \frac{(-1)^j}{j+1} \binom{p(j+1)}{j} \binom{j(p-1)+p}{k} L_1^{k+j+1}  (x^p + y^p)^k (x+y)^{j(p-1)+p-k} \\
		\intertext{The terms of this polynomial are homogeneous of degree $pk+j(p-1)+p-k = (k+j)(p-1)+p$. So, modulo $(x,y)^{p^2+1}$, the terms in the sum vanish when $k+j > p$.  Therefore, we can restrict the upper bound on the inner sum to $p-j$ to obtain}
		&= \sum_{j=0}^p \sum_{k=0}^{p-j} \frac{(-1)^j}{j+1} \binom{p(j+1)}{j} \binom{j(p-1)+p}{k} L_1^{k+j+1} (x^p + y^p)^k (x+y)^{j(p-1)+p-k} \\
		&= \sum_{j=0}^p \sum_{i=j}^p \frac{(-1)^j}{j+1} \binom{p(j+1)}{j} \binom{j(p-1)+p}{i-j} L_1^{i+1} (x^p + y^p)^{i-j} (x+y)^{p+jp-i} \\
		&= \sum_{i=0}^p \sum_{j=0}^i \frac{(-1)^j}{j+1} \binom{p(j+1)}{j} \binom{j(p-1)+p}{i-j} L_1^{i+1} (x^p + y^p)^{i-j} (x+y)^{p+jp-i}.
	\end{align*}
	Here, we have set $i=k+j$. For the final step, note that the second sum is over $i,j$ such that $0 \leq j \leq i \leq p$.  This condition is equivalent to the condition that $0 \leq i \leq p$ and $ 0 \leq j \leq i$; hence, the sums are the same.
	
	Evaluating the final term, $L_2 x^{p^2}$ at $\log(x) + \log(y)$ we have
	\begin{align*}
		L_2 (\log(x) + \log(y))^{p^2} &= L_2 (x+y)^{p^2} \mod (x,y)^{p^2+1}
	\end{align*}
	So, modulo $(x,y)^{p^2+1}$, and substituting for $L_1$ and $L_2$ using \eqref{eq:Ls}, 
	we have
	\begin{align*}
		\exp&(\log(x) + \log(y)) \\
		&= (x+y) + L_1 (x^p + y^p) + L_2(x^{p^2} + y^{p^2}) \\
		&\qquad -L_1(x+y)^p - \sum_{i=1}^p L_1^{i+1}\sum_{j=0}^i \frac{(-1)^j}{j+1} \binom{p(j+1)}{j} \binom{j(p-1)+p}{i-j} (x^p + y^p)^{i-j} (x+y)^{p+pj - i}\\
				&= x+y-\frac{u_1}{p-p^{p}} pC_p (x,y) \\
		&\qquad - \sum_{i=1}^p u_1^{i+1} \frac{1}{(p-p^p)^{i+1}} \sum_{j=0}^i \frac{(-1)^j}{j+1} \binom{p(j+1)}{j} \binom{j(p-1)+p}{i-j} (x^p + y^p)^{i-j} (x+y)^{p+pj-i} \\
		&\qquad - \frac{1}{p-p^{p^2}} \left( 1 + \frac{u_1^{p+1}}{p-p^p} \right)pC_{p^2}(x,y) \\
		&= x+y - \frac{u_1}{1-p^{p-1}} C_p (x,y) - \sum_{i=1}^p u_1^{i+1} P_{p+i(p-1)} (x,y) - \frac{1}{1-p^{p^2-1}} \left( 1 + \frac{u_1^{p+1}}{p-p^p} \right)C_{p^2}(x,y). 
	\end{align*}
	\end{proof}

\subsection{The action}
The following is Proposition 3.2 in \cite{lader}. To make sense of the statement, recall the following facts. Fix $g= \sum_{i\geq 0} g_i S^i$ in $\SS$ with $g_i^{p^2}-g_i=0$. Let $g_* \colon R \to R$ be ring homomorphism given by the left action of $\SS$ on $R$.
Then there is an associated $\star$-isomorphism $h_g \colon g_*F \to F$, and since $F$ is $p$-typical, it takes the form
\begin{equation}\label{eq:hg} h_g(x) =\sideset{}{^F} \sum_{i\geq 0}t_i(g) x^{p^i}\end{equation}
for $t_i(g) \in R$ such that $t_i(g) = g_i$ modulo $(p,u_1)$. In particular, $t_0$ is a unit. Further, note that $[p]_F(x) = px +_F u_1 x^p +_F x^{p^2}$
and the $t_i(g)$ satisfy the following recursive formula
\begin{equation}\label{eq:recursive}
h_g([p]_{g_*F}(x)) = [p]_{F}(h_g(x)).\end{equation}
Below, we fix $g$ and abbreviate $t_i = t_i (g)$.

\begin{theorem}[Lader]\label{thm:tislad}
 Let $p$ be any prime. Let $g \in \SS$. Then
\begin{enumerate}[(a)]
	\item $g_* (u_1)= t_0^{p-1}u_1 + t_0^{-1}t_1(p-p^p)$,
	\item $t_0 = t_0^{p^2}+ u_1 t_1^p  -  t_0^{p(p-1)} t_1 u_1^p $ modulo $(p)$, and 
	\item $t_1 = t_1^{p^2} + t_2^p u_1 - \sum_{i=1}^{p-1} \frac{1}{p} \binom{p}{i} u_1^{i+1} t_1^{pi} t_0^{p^2 (p-i)}$ modulo $(p, u_1^{p+1})$
\end{enumerate}
\end{theorem}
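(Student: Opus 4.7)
The proof hinges on unpacking the identity \eqref{eq:recursive}, $h_g([p]_{g_*F}(x)) = [p]_F(h_g(x))$, as an equality of formal power series in $x$ and then matching coefficients of $x^p$, $x^{p^2}$, and $x^{p^3}$ modulo the appropriate ideals. The explicit description of $F(x,y)$ modulo $(x,y)^{p^2+1}$ provided by \fullref{thm:Fform} supplies enough control on the $F$-sum to carry out all three comparisons, and the three assertions correspond to three successively refined pieces of this recursion.

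For part (a) I would extract the coefficient of $x^p$ on both sides. Working modulo $(x^{p+1})$, only the $px$ and $g_*(u_1) x^p$ summands of $[p]_{g_*F}(x)$ are relevant, and applying $h_g$ (via \eqref{eq:hg}) produces an $x^p$ coefficient equal to $t_0\, g_*(u_1) + p^p t_1$, where $p^p t_1$ is contributed by the $t_1 (px)^p$ summand of $h_g(px)$. On the right, the $x^p$ coefficient of $[p]_F(h_g(x)) = p\,h_g(x) +_F u_1\, h_g(x)^p +_F h_g(x)^{p^2}$ is $p t_1 + u_1 t_0^p$. Equating the two expressions and solving for $g_*(u_1)$ yields the formula in (a).

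For part (b) I would reduce the recursion modulo $(p)$ and read off the coefficient of $x^{p^2}$. Since $F(px,y) \equiv y \pmod p$, the $px$ contribution drops from both sides, and using that $h_g$ is a $\star$-isomorphism the identity becomes $h_g(g_*(u_1) x^p) +_F h_g(x^{p^2}) \equiv u_1\, h_g(x)^p +_F h_g(x)^{p^2} \pmod p$. The right contributes $u_1 t_1^p + t_0^{p^2}$ to the coefficient of $x^{p^2}$; the left contributes $t_0\, g_*(u_1)^p$ from $h_g(g_*(u_1) x^p)$ together with a correction coming from the $-u_1 C_p$ piece of $F$ in \fullref{thm:Fform}. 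Substituting the expression for $g_*(u_1)$ from (a) and simplifying produces the stated identity for $t_0$, with the cross term $-t_0^{p(p-1)} t_1 u_1^p$ arising from the $C_p$ correction.

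Part (c) comes from the same equation by reading off the coefficient of $x^{p^3}$ modulo $(p, u_1^{p+1})$. The new ingredient is the contribution of the $P_{p+i(p-1)}(x,y)$ terms of $F$ from \fullref{thm:Fform}: evaluated on the $h_g$-type inputs and tracked carefully, they are exactly what produces the sum $\sum_{i=1}^{p-1} \tfrac{1}{p}\binom{p}{i} u_1^{i+1} t_1^{pi} t_0^{p^2(p-i)}$ in the conclusion, alongside the leading $t_1^{p^2}$ and $t_2^p u_1$ pieces coming from $h_g(x^{p^2})$ and $u_1\, h_g(x)^p$ respectively. The main obstacle throughout is the disciplined bookkeeping of the $F$-sum corrections: the leading-order terms of $h_g$ and $[p]_F$ give the principal pieces of each identity straightforwardly, but verifying that the cross terms coming from $C_p$, $C_{p^2}$, and the $P_{p+i(p-1)}$ collapse to exactly the shape advertised (with no leftover error) is where the explicit formula of \fullref{thm:Fform} earns its keep.
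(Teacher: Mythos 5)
Your overall framework is exactly the one the paper uses: expand the recursion $h_g([p]_{g_*F}(x)) = [p]_F(h_g(x))$, substitute the explicit form of $F$ from \fullref{thm:Fform}, and match coefficients of $x^p$, $x^{p^2}$, $x^{p^3}$ modulo the stated ideals. Part (a) is described correctly.

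However, the mechanisms you propose for parts (b) and (c) misattribute where the key terms come from, and carrying out the bookkeeping as described would not produce the stated identities. In part (b) there is no $C_p$ correction at all: modulo $(p, x^{p^2+1})$, the only nontrivial $F$-sum has one argument vanishing, so the formal sum degenerates to an ordinary sum, and the cross term $-t_0^{p(p-1)} t_1 u_1^p$ arises directly as $t_1\bigl(t_0^{p-1}u_1 x^p\bigr)^p$ from the $t_1(\cdot)^p$ summand of $h_g$ applied to $g_*(u_1)x^p +_{g_*F} x^{p^2}$ — the formal group corrections contribute nothing to the coefficient of $x^{p^2}$. In part (c) you have it backwards: the paper shows that all the $P_{p+i(p-1)}$ contributions vanish modulo $(p, u_1^{p+1}, x^{p^3+1})$ on both sides, and the entire sum $\sum_{i=1}^{p-1}\frac{1}{p}\binom{p}{i}u_1^{i+1}t_1^{pi}t_0^{p^2(p-i)}$ comes from the $-\frac{u_1}{1-p^{p-1}}C_p$ correction when computing $(A+B)+_F C$ with $A=u_1 t_0^p x^p$, $B=u_1 t_1^p x^{p^2}$, $C=t_0^{p^2}x^{p^2}$ on the right-hand side. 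Relatedly, you attribute $t_1^{p^2}$ to $h_g(x^{p^2})$, but $h_g(x^{p^2})$ contributes $t_1 x^{p^3}$ (which is the surviving left-hand side coefficient); the $t_1^{p^2}$ comes from $\bigl(h_g(x)\bigr)^{p^2}$ on the right. Because your sketch routes the crucial corrections through the wrong pieces of $F$, the argument as written would not close — you would need to redo the term-by-term tracking to discover the $C_p$ mechanism the paper uses.
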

\begin{proof} 
First, studying \eqref{eq:recursive} modulo $(x^{p+1})$ gives
\[t_0( px\underset{g_*F}+g_* (u_1) x^p  ) \underset{F}+  t_1( px )^p = p( t_0 x +_F t_1 x^p) +_F u_1(t_0x)^p. \]
The higher order terms are all of order greater than $x^p$, so this reduces to
\[ t_0px+t_0g_*(u_1)x^p + t_1p^p x^p = pt_0x+ pt_1x^p + u_1 t_0^p x^p.\]
Comparing the coefficients of $x^p$ gives (a).

Using this result modulo $(p)$, \eqref{eq:recursive} gives the following equality:
	\begin{equation}\label{eq:theone}	\sideset{}{^F} \sum_{i \geq 0} t_i \left(t_0^{p-1} u_1 x^p \underset{g_*F}+ x^{p^2} \right)^{p^i} = u_1 \left( \sideset{}{^F} \sum_{i \geq 0} t_i x^{p^i} \right)^p \underset{F}+ \left( \sideset{}{^F} \sum_{i \geq 0} t_i x^{p^i} \right)^{p^2}. \end{equation}
This trivially reduces to
	\[
		t_0 \left(t_0^{p-1} u_1  x^p \underset{g_* F}+ x^{p^2} \right) \underset{F}+ t_1 \left( t_0^{p-1} u_1 x^p  \underset{g_* F}+ x^{p^2} \right)^p = u_1 \left( t_0 x \underset{F}+ t_1 x^p \underset{F}+ t_2 x^{p^2} \right)^p \underset{F}+ \left( t_0 x \underset{F}+ t_1 x^p \underset{F}+ t_2 x^{p^2} \right)^{p^2} .
	\]
The higher order terms are divisible by $x^{p^2+1}$, so we conclude that
	\[
		 t_0^{p} u_1  x^p + \big( t_0 + t_1 (t_0^{p-1} u_1 )^p \big) x^{p^2} = u_1 t_0^p x^p + \big( u_1 t_1^p + t_0^{p^2}\big) x^{p^2}. 
	\]
Part (b) follows by comparing coefficients of $x^{p^2}$.

	The proof of part (c) is more involved: it is proved by comparing the coefficients of $x^{p^3}$.  First, using parts (a) and (b) we compute the following modulo $(x^{p^3+1}, u_1^{p+1})$, using the fact that $g_* (u_1) x^p \underset{g_* F}+ x^{p^2} =  t_0^{p-1} u_1 x^{p} + x^{p^2} $ modulo $(x^{p^{2}+p})$:
	\begin{align*}
		\sideset{}{^F} \sum_{i \geq 0} t_i \left( g_* (u_1) x^p \underset{g_* F}+ x^{p^2} \right)^{p^i}  
			&=
	 t_0 \left( t_0^{p-1} u_1 x^p \underset{g_* F}+ x^{p^2} \right) \underset{F}+ t_1\left( t_0^{p-1} u_1 x^{p} + x^{p^2} \right)^p \underset{F}+ t_2 \left( t_0^{p-1} u_1 x^p \right)^{p^2}  \\
		&= t_0 \left( t_0^{p-1} u_1 x^p \underset{g_* F}+ x^{p^2} \right) \underset{F}+ \left( t_1 t_0^{p(p-1)} u_1^p x^{p^2} + t_1 x^{p^3} \right) 
	\end{align*}
	Therefore, the coefficient of $x^{p^3}$ is $c + t_1$ where $c$ is the coefficient of $x^{p^3}$ in 
		\[	t_0 \left( t_0^{p-1} u_1 x^p \underset{g_* F}+ x^{p^2} \right) \underset{F}+ t_1 t_0^{p(p-1)} u_1^p x^{p^2}. \]
	To compute this coefficient define 
		\[	X = t_0^{p-1} u_1 x^p, \qquad Y = x^{p^2}, \qquad Z = t_1 t_0^{p(p-1)} u_1^p x^{p^2}. \]
	First, we note that $X^i Z^j = 0$ modulo $(u^{p+1})$ for $i,j > 0$.  Then, letting 
	\[F(s,t) = s+t +\sum_{i,j>0} a_{i,j}s^i t^j,\] 
	we have that $c$ is the coefficient of $x^{p^3}$ in the following expression which is computed modulo $(u_1^{p+1}, x^{p^3+1})$:
		\begin{align*}	
			t_0 (X \underset{g_* F}+ Y) \underset{F}+ Z &= t_0 (X \underset{g_* F}+ Y) + Z + \sum_{i,j > 0} a_{i,j} (t_0 X \underset{g_* F}+ Y)^i Z^j \\
			&= t_0 (X \underset{g_* F}+ Y) + Y + Z + \sum_{i,j > 0} a_{i,j} Y^i Z^j - Y  \\
			&= t_0 (X \underset{g_* F}+ Y) + Y \underset{F}+ Z - Y .
		\end{align*}
		From \fullref{thm:Fform}, we have that, modulo $(x^{p^3+1})$,
		\[
			Y \underset{F}+ Z =  Y + Z  - \frac{u_1}{1-p^{p-1}}C_p (Y,Z) -  \sum_{i=1}^p u_1^{i+1} P_{p+i(p-1)}(Y,Z) 
		\]
		We have dropped the term involving $C_{p^2}(Y,Z)$ since it has degree greater than ${p^3}$.
		Each monomial in $C_p(Y,Z)$ is a multiple of $Z$, so $u_1C_p(Y,Z)$ vanishes modulo $(u_1^{p+1})$. The terms of the sum indexed by $i$ are homogeneous of degree $p^3+i(p^3-p^2)$ and so vanish modulo $(x^{p^3+1})$. 
		Therefore, the coefficient of $x^{p^3}$ in $Y \underset{F}+ Z$ is zero modulo $(u_1^{p+1})$.

	Using \fullref{thm:Fform} again, we have that modulo $(x^{p^3+1})$
		\[
			X \underset{g_* G}+ Y = X + Y - \frac{u_1}{1-p^{p-1}} C_p (X,Y) - \sum_{i=1}^p u_1^{i+1} P_{p + i(p-1)} (X,Y)
		\]
		Again, the term involving $C_{p^2}(X,Y)$ has degree greater than $p^3$ and has been omitted.
The highest power of $x$ in $C_p (X,Y)$ is $p^3-p^2+p$, so this term in the sum cannot contribute to the coefficient of $x^{p^3}$.  
This leaves the sum indexed by $i$. Fix $i$. Then if $i-j\geq1$, the monomials of $P_{p+i(p-1)}$ are zero modulo $(u_1^{p+1}, x^{p^3+1})$. So we only consider the terms such that $i=j$, which gives
		\begin{align*}
			\sum_{i=1}^p &u_1^{i+1} \frac{1}{(p-p^p)^{i+1}} \frac{(-1)^i}{i+1} \binom{p(i+1)}{i} (X+Y)^{p+i(p-1)}.
		\end{align*}		
When the power of $Y=x^{p^2}$ in the binomial expansion $(X+Y)^{p+i(p-1)}$ exceeds $p$, the monomials vanish modulo $(x^{p^3+1})$. In the remaining monomials, the exponent of $X =  t_0^{p-1} u_1 x^p$ is at least $i(p-1)$, so that after multiplying with 
$u_1^{i+1}$, these monomials vanish modulo $(u_1^{p+1})$. 
Therefore, the coefficient of $x^{p^3}$ in $X +_F Y$ is also zero modulo $(u_1^{p+1})$. We conclude that $c=0$ modulo $(u_1^{p+1})$ which implies that the coefficient of $x^{p^3}$ on the left hand side of \eqref{eq:theone} is $t_1$ modulo $(u_1^{p+1})$.
	
	Now, we need to compute the coefficient of $x^{p^3}$ on the right hand side of \eqref{eq:theone}. Modulo $(u_1^{p+1}, x^{p^3+1})$, we have,
	\begin{align*}
		u_1 \left( \sideset{}{^F} \sum_{i \geq 0} t_i x^{p^i} \right)^p \underset{F}+ \left( \sideset{}{^F} \sum_{i \geq 0} t_i x^{p^i} \right)^{p^2} &= u_1 ( t_0x \underset{F}+ t_1 x^p \underset{F}+ t_2 x^{p^2})^p \underset{F}+ (t_0x \underset{F}+ t_1 x^p \underset{F}+ t_2 x^{p^2})^{p^2} \\
		&= u_1 \big((t_0 x \underset{F}+ t_1x^p)^p + t_2^p x^{p^3} \big) \underset{F}+ (t_0^{p^2}x^{p^2} + t_1^{p^2} x^{p^3}) \\
		&=( u_1 (t_0x \underset{F}+ t_1x^p)^p \underset{F}+ t_0^{p^2} x^{p^2} )+ u_1 t_2^p x^{p^3} + t_1^{p^2} x^{p^3} &
	\end{align*}
	We apply \fullref{thm:Fform}. Using the fact that we are working modulo $(p)$, that $C_{p^2} (t_0 x, t_1 x^p)^p = 0 $ modulo $(x^{p^3+1})$, and that, modulo $( u_1^{p+1}, x^{p^3+1})$,
		\begin{align*}
		u_1 (t_0x \underset{F}+ t_1 x^p)^p 
		&= u_1 \left( t_0 x + t_1 x^p - \frac{u_1}{1-p^{p-1}} C_p (t_0x, t_1 x^p) - \sum_{i=1}^p u_1^{i+1} P_{p+i(p-1)} (t_0 x, t_1 x^p) \right)^p \\
		&= u_1 (t_0^p x^p + t_1^p x^{p^2})
	\end{align*}
the problem reduces to computing the coefficient of $x^{p^3}$ in
		\[	(u_1 t_0^p x^p + u_1 t_1^p x^{p^2}) \underset{F}+ t_0^{p^2} x^{p^2} + \big( u_1 t_2^p + t_1^{p^2} \big) x^{p^3}. \]
	Let
		\[	A = u_1 t_0^p x^p, \qquad B = u_1 t_1^p x^{p^2}, \qquad C = t_0^{p^2} x^{p^2}. \]
	Then the coefficient of $x^{p^3}$ in the preceding expression is $c + (u_1 t_2^p + t_1^{p^2})$ where $c$ is the coefficient of $x^{p^3}$ in
		\[	(A+B) \underset{F}+ C. \]
	Using \fullref{thm:Fform} once again, we have that modulo $(x^{p^3+1})$
		\begin{align*}	
			(A+B) \underset{F}+ C &= A+B+C - \frac{u_1}{1-p^{p-1}} C_p (A+B,C) - \sum_{i=1}^p u_1^{i+1} P_{p+i(p-1)}(A+B,C)
		\end{align*}
	dropping as usual the term involving $C_{p^2}(A+B,C)$ for degree reasons. Since $A+B$ is divisible by $u_1x^p$ and $C$ by $x^{p^2}$, a slightly tedious but straightforward inspection of the sum indexed by $i$ shows that it vanishes modulo $(u_1^{p+1}, x^{p^3+1})$.
Clearly, $A+B+C$ has no powers of $x^{p^3}$, so cannot contribute to the coefficient of $x^{p^3}$. It remains to inspect $ C_p (A+B,C) $. We have
\begin{align*}
 C_p (A+B,C)  &= \sum_{k=1}^{p-1}\frac{1}{p}\binom{p}{k}u_1^{k} (t_0^px^p + t_1^px^{p^2})^{k} t_0^{p^2(p-k)} x^{p^2(p-k)} \\
 &= \sum_{k=1}^{p-1}\frac{1}{p}\binom{p}{k}u_1^{k} \left(\sum_{\ell=0}^{k} \binom{k}{\ell} t_0^{p\ell}  t_1^{p(k-\ell)}x^{p\ell + p^2(k-\ell)}\right) t_0^{p^2(p-k)} x^{p^2(p-k)} \\
  &= \sum_{k=1}^{p-1}\frac{1}{p}\binom{p}{k}u_1^{k} \left(\sum_{\ell=0}^{k} \binom{k}{\ell} t_0^{p\ell+p^2(p-k)}  t_1^{p(k-\ell)}x^{p^3-\ell(p^2-p) }\right)  
\end{align*}
The terms of degree $p^3$ correspond to those for which $\ell=0$, which is exactly $C_p(B, C)$.  
Hence, the coefficient of $x^{p^3}$ in $(A+B) \underset{F}+ C$ is equal to the coefficient of $x^{p^3}$ in 
		\begin{align*}	
			-\frac{u_1}{1-p^{p-1}} C_p (B, C)
			&= -\frac{1}{p-p^p}\left( \sum_{k=1}^{p-1} \binom{p}{k} u_1^{k+1} t_0^{p^2 (p-k)} t_1^{pk} \right) x^{p^3}.
		\end{align*}
	So, combining this with the above, we get that the coefficient of $x^{p^3}$ on the right hand side of \eqref{eq:theone} is
		\[	t_1^{p^2} + t_2^p u_1 - \frac{1}{p-p^p} \sum_{i=1}^{p-1} \binom{p}{i} u_1^{i+1} t_1^{pi} t_0^{p^2 (p-i)}\]
			modulo $(u_1^{p+1})$.
	Hence, equating coefficients, we have
		\[	t_1 = t_1^{p^2} + t_2^p u_1 - \sum_{i=1}^{p-1}  \frac{1}{p} \binom{p}{i} u_1^{i+1} t_1^{pi}  t_0^{p^2 (p-i)}  \ \mod (p,u_1^{p+1}) \]
	as claimed.
	\end{proof}
	
We finish with the following result.
\begin{thm}[Lader]\label{thm:include2}
Let $p$ be any prime. Let $g=1+g_1S+g_2S^2$ modulo $(S^3)$. Then
\[t_0= 1+g_1^pu_1-g_1u_1^p+(g_2-g_2^p)u_1^{p+1}+\sum\limits_{i=1}^{p-1}{\frac{1}{p}}{{p}\choose{i}}g_1^{pi}u_1^{p+1+i} + g_1^2u_1^{2p}+g_1^{p}  u_1^{p^2} \mod (p, u_1^{2p+1}).\]
\end{thm}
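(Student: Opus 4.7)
The plan is to bootstrap using the recursive identities of \fullref{thm:tislad}, starting from the initial data $t_i \equiv g_i \pmod{(p, u_1)}$ and improving the approximation of $t_0$ modulo successively higher powers of $u_1$. The key leverage is that in characteristic $p$, Frobenius is both additive and multiplicative, so a correction of order $u_1^k$ in $t_0$ or $t_1$ becomes a correction of order $u_1^{pk}$ in any of the Frobenius powers $t_0^p$, $t_1^p$, $t_0^{p(p-1)}$, $t_0^{p^2}$; consequently, each application of the recursions gains several more degrees of $u_1$-precision.

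First I would substitute $t_0 \equiv 1$ and $t_i \equiv g_i$ into relation (b) of \fullref{thm:tislad} to read off $t_0 \pmod{(p, u_1^{p+1})}$: the three terms collapse to $1$, $g_1^p u_1$, and $g_1 u_1^p$, giving $t_0 \equiv 1 + g_1^p u_1 - g_1 u_1^p$. In parallel, the same substitution in (c), together with $g_1^{p^2} = g_1$ for $g_1 \in \FF_{p^2}$, yields
\[ t_1 \equiv g_1 + g_2^p u_1 - \sum_{i=1}^{p-1} \tfrac{1}{p}\tbinom{p}{i} g_1^{pi} u_1^{i+1} \pmod{(p, u_1^{p+1})}. \]

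Second I would feed these refined expressions back into (b) to extract $t_0 \pmod{(p, u_1^{2p+1})}$. Writing $\delta = t_0 - 1$ and using $(1 + \delta)^{p(p-1)} = (1 + \delta^p)^{p-1} \equiv \sum_{k=0}^{p-1}(-1)^k \delta^{pk} \pmod{p}$, the Frobenius of the refined $\delta$ gives $t_0^{p(p-1)} \equiv 1 - g_1 u_1^p + g_1^2 u_1^{2p} \pmod{(p, u_1^{2p+1})}$, and Frobenius applied to the refined $t_1$, combined with $g_2^{p^2} = g_2$ and Fermat's congruence $\bigl(\tfrac{1}{p}\tbinom{p}{i}\bigr)^p \equiv \tfrac{1}{p}\tbinom{p}{i} \pmod p$, yields $t_1^p \equiv g_1^p + g_2 u_1^p - g_1 u_1^{2p} \pmod{(p, u_1^{2p+1})}$. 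Reassembling $t_0^{p^2} + u_1 t_1^p - t_0^{p(p-1)} t_1 u_1^p$ and collecting terms, where the cross product $(g_1 u_1^p)(g_1 u_1^p)$ from $t_0^{p(p-1)} t_1 u_1^p$ reinforces the $g_1^2 u_1^{2p}$ contribution, produces the claimed expression.

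The main subtlety will be verifying that the single formula is uniform in $p$, including the prime $p = 2$. For $p \geq 3$ one has $p^2 \geq 2p+1$, so $t_0^{p^2} \equiv 1 \pmod{(p, u_1^{2p+1})}$ and the term $g_1^p u_1^{p^2}$ of the stated formula simply vanishes; it is written only because at $p = 2$ one has $p^2 = 4 < 5 = 2p+1$, so the Frobenius $t_0^4$ contributes a genuine $g_1^2 u_1^4$ summand. Similarly $p(p-1) = 2$ when $p = 2$, forcing $t_0^2 = 1 + \delta^2$ to be expanded through second order in $\delta$ and producing extra $u_1^4$ contributions. One then checks that the three coincident $g_1^2 u_1^4$ summands in the written formula---from the $i = 1$ term of the middle sum, from $g_1^2 u_1^{2p}$, and from $g_1^p u_1^{p^2}$---add modulo $2$ to match the direct bootstrap computation at $p = 2$. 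This attentive bookkeeping is the only additional care needed beyond the treatment for odd primes in \cite{lader} and \cite{hkm}.
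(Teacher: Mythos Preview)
Your proposal is correct and follows essentially the same approach as the paper: both bootstrap from the recursions of \fullref{thm:tislad}, first substituting $t_i\equiv g_i$ into (b) and (c) to obtain $t_0,t_1$ modulo $(p,u_1^{p+1})$, and then feeding these back into (b) to extract $t_0$ modulo $(p,u_1^{2p+1})$. Your extra bookkeeping for $p=2$ (tracking the three coincident $g_1^2u_1^4$ terms) is a nice complement to the paper's remark that the $g_1^p u_1^{p^2}$ term simply vanishes when $p$ is odd.
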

\begin{proof}
Using the fact that, modulo $(p,u_1)$, $t_0 = 1$, $t_1 = g_1$, and $t_2= g_2$ and the fact that $g_i^{p^2}=g_i$, it follows from part (c) of \fullref{thm:tislad} that
\[t_1 = g_1 + g_2^p u_1 - u_1^2 \sum_{i=1}^{p-1} \frac{1}{p} \binom{p}{i} u_1^{i-1} g_1^{pi}  \mod (p, u_1^{p+1}).\]
From part (b) of \fullref{thm:tislad}, we also conclude that
\[t_0 = 1+g_1^{p}  u_1 -g_1u_1^p \mod (p,u_1^{p+1}).\]

Now, re-substituting these results into part (b) of \fullref{thm:tislad} and computing modulo $(p, u_1^{2p+1})$, we have
\begin{align*}
t_0 &=  (1+g_1^{p}  u_1 )^{p^2} + u_1 \left( g_1 + g_2^p u_1  \right)^p  -(1+g_1^{p}  u_1  )^{p(p-1)} \left( g_1 + g_2^p u_1 - u_1^2\sum_{i=1}^{p-1} \frac{1}{p} \binom{p}{i} u_1^{i-1} g_1^{pi}\right) u_1^p   \\
&=  1+g_1^{p}  u_1^{p^2} + u_1g_1^p + g_2 u_1^{p+1}   -(u_1^p -g_1  u_1^{2p}  ) \left( g_1 + g_2^p u_1 - u_1^2\sum_{i=1}^{p-1} \frac{1}{p} \binom{p}{i} u_1^{i-1} g_1^{pi}\right)  \\
&=  1+ g_1^p u_1-g_1u_1^p + (g_2-g_2^p) u_1^{p+1}   + \sum_{i=1}^{p-1} \frac{1}{p} \binom{p}{i} u_1^{p+i+1} g_1^{pi} +g_1^2  u_1^{2p}+g_1^{p}  u_1^{p^2}  .
\end{align*}
This proves the claim.
\end{proof}

Note that the last term in \fullref{thm:include2} vanishes modulo $(u_1^{2p+1})$ when $p$ is odd.

\subsection{Formulas for the prime $2$}
To prove our results when $p=2$, we need more information on the action of $g$ than what was determined in \fullref{thm:include2}. We gather the information in this section. We note that the computations in this section are computer assisted, but are consistent with the results of \cite{Paper2}, which study the action of the group of automorphisms of the formal group law of a super-singular curve on an associated Lubin--Tate ring.

First, we get specific about the results of \fullref{thm:tislad} and  \fullref{thm:include2} in the case at hand. 
\begin{cor}\label{cor:iniformp2}
Let $p=2$ and $g\in \SS$. Then
\begin{enumerate}[(a)]
	\item $g_* u_1= t_0 u_1$ modulo $(2)$,
	\item $t_0 = t_0^{4}+ u_1 t_1^2  +  t_0^{2} t_1 u_1^2 $ modulo $(2)$, and 
	\item $t_1 = t_1^{4} + t_2^2 u_1 + u_1^{2} t_1^{2} t_0^{4}$ modulo $(2, u_1^{3})$.
\end{enumerate}
Further, for $g =1+ g_1S +g_2 S^2 +\ldots$, 
\begin{align*}
t_1 & = g_1+g_2^2u_1 + g_1^2 u_1^2  \mod (2, u_1^{3}).\\
t_0 & = 1+g_1^2 u_1+g_1u_1^2+(g_2+g_2^2)u_1^{3}+ g_1^{2}u_1^{4} \mod (2, u_1^{5}).
\end{align*}
\end{cor}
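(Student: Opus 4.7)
The plan is essentially to specialize \fullref{thm:tislad} and \fullref{thm:include2} to $p=2$ and then simplify, so the work is really bookkeeping rather than a fresh computation. First I would handle parts (a)--(c), each of which is a direct substitution $p=2$ into the three items of \fullref{thm:tislad}, combined with the observation that $-1 \equiv 1 \pmod 2$ so that all minus signs become plus signs. For (a), the correction term in \fullref{thm:tislad}(a) is $t_0^{-1} t_1(p - p^p)$, which for $p=2$ equals $t_0^{-1} t_1(2-4) = -2\, t_0^{-1} t_1 \equiv 0 \pmod 2$, leaving $g_*(u_1) = t_0 u_1$. For (b), substituting $p=2$ gives $t_0 = t_0^4 + u_1 t_1^2 - t_0^2 t_1 u_1^2$, and changing the sign mod $2$ yields the claim. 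For (c), the sum $\sum_{i=1}^{p-1} \frac{1}{p}\binom{p}{i} u_1^{i+1} t_1^{pi} t_0^{p^2(p-i)}$ collapses to the single term $\frac{1}{2}\binom{2}{1} u_1^2 t_1^2 t_0^4 = u_1^2 t_1^2 t_0^4$; again flipping signs mod $2$ gives the stated formula.

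Next I would derive the two explicit expansions of $t_1$ and $t_0$ in terms of the coefficients $g_i$. These are obtained by re-running the proof of \fullref{thm:include2} at $p=2$, using the initial data $t_0 \equiv 1$, $t_1 \equiv g_1$, $t_2 \equiv g_2$ modulo $(2, u_1)$ and the fact that $g_i^4 = g_i$. Substituting these into the formula of part (c) above yields
\[
t_1 \equiv g_1 + g_2^2 u_1 - \tfrac{1}{2}\tbinom{2}{1}\, g_1^2 u_1^2 \equiv g_1 + g_2^2 u_1 + g_1^2 u_1^2 \pmod{(2, u_1^3)},
\]
and substituting into the formula of part (b) then gives $t_0 \equiv 1 + g_1^2 u_1 + g_1 u_1^2 \pmod{(2, u_1^3)}$, as the intermediate step in the proof of \fullref{thm:include2}.

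To obtain $t_0$ to higher precision, I would simply apply \fullref{thm:include2} with $p = 2$. The subtlety here, and the only place where prime $2$ behaves differently from the odd primes, is the remark immediately following \fullref{thm:include2}: when $p=2$ the exponent $p^2 = 4$ is strictly smaller than $2p+1 = 5$, so the final term $g_1^p u_1^{p^2}$ does \emph{not} vanish from the truncated expansion and must be retained. Concretely, the coefficient of $u_1^4$ picks up contributions from three separate summands, namely $\tfrac{1}{2}\binom{2}{1} g_1^2 u_1^4$, $g_1^2 u_1^{2p}$, and $g_1^p u_1^{p^2}$, each equal to $g_1^2 u_1^4$, giving a total of $3 g_1^2 u_1^4 \equiv g_1^2 u_1^4 \pmod 2$. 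Combined with the mod $2$ sign flips on the $g_1 u_1^2$ and $(g_2 - g_2^2)u_1^3$ terms, this produces the claimed formula.

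The main obstacle is therefore not any deep issue but merely the need to keep track of which terms of \fullref{thm:include2} survive when $p=2$; the proof is a matter of a careful substitution, and I would present it as a short direct computation rather than an independent argument.
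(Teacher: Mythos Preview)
Your proposal is correct and matches the paper's approach exactly: the corollary is stated without proof precisely because it is the specialization of \fullref{thm:tislad} and \fullref{thm:include2} to $p=2$, and the paper's only commentary is the remark following \fullref{thm:include2} about the term $g_1^{p}u_1^{p^2}$ surviving when $p=2$, which you have also identified. Your tally of the three contributions to the $u_1^4$ coefficient is accurate.
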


The computation of $F(x,y)=\exp(\log(x)+\log(y))$ modulo $(x,y)^{16}$ using the information provided at the beginning of \fullref{sec:unidefcomp} is not expensive for a computer. It would not be enlightening to include the formula here, but the following computations use it, together with the following fact.
\begin{lem}\label{lem:Fmod16}
If $F(x,y)$ is known modulo $(x,y)^{16}$ and $x^2 | X$ and $x^4 | Y$, then $F(X, Y)$ is determined modulo $(x,y)^{34}$.
\end{lem}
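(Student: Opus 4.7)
The plan is to exploit the defining property of a formal group law, namely that $F(x,0) = x$ and $F(0,y) = y$, which forces the expansion
\[F(x,y) = x + y + \sum_{i, j \geq 1} a_{ij} x^i y^j,\]
where both indices are strictly positive in the sum. Substituting gives $F(X,Y) = X + Y + \sum_{i,j \geq 1} a_{ij} X^i Y^j$. Since $x^2 \mid X$ and $x^4 \mid Y$, the monomial $X^i Y^j$ is divisible by $x^{2i+4j}$.

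Next I would split the double sum into the known range $i+j \leq 15$ and the unknown range $i+j \geq 16$. The linear terms $X + Y$ together with the finitely many monomials $a_{ij} X^i Y^j$ with $1 \leq i+j \leq 15$ are evidently determined by $F \bmod (x,y)^{16}$. For the unknown terms, with $i + j \geq 16$ and the crucial constraint $i, j \geq 1$ coming from the first paragraph, one has
\[2i + 4j = 2(i+j) + 2j \geq 2 \cdot 16 + 2 = 34,\]
so every such contribution lies in $(x)^{34} \subseteq (x,y)^{34}$. Hence $F(X,Y) \bmod (x,y)^{34}$ depends only on $F \bmod (x,y)^{16}$, as claimed.

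The bound is sharp: the pair $(i,j) = (15,1)$ realizes $2i + 4j = 34$ exactly, so $34$ cannot be improved without strengthening the hypotheses. There is no real obstacle; the only point worth flagging is that separating off $X + Y$ and invoking $F(x,0) = x$, $F(0,y) = y$ rules out the ``pure'' coefficients $a_{i,0}$ for $i \geq 2$, which is precisely what prevents the conclusion from collapsing to $(x,y)^{32}$.
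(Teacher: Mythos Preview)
Your proof is correct and follows essentially the same approach as the paper's: both use the formal group law identities $F(x,0)=x$, $F(0,y)=y$ to ensure the unknown terms lie in $xy(x,y)^{14}$, and then bound the degree after substituting $X$ and $Y$. Your write-up is simply a more detailed unpacking of the paper's two-sentence sketch, with the added (correct) sharpness observation.
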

\begin{proof}
The error terms for $F(x,y)$ have the form $xy(x,y)^{14}$. If $X$, $Y$ are as stated, the monomials $XY(X,Y)^{14}$ have degree at least $34$. 
\end{proof}
As before, we collect information from the relation
	\begin{equation}\label{eq:theonep2}	\sideset{}{^F} \sum_{i \geq 0} t_i \left(t_0 u_1 x^2 \underset{g_*F}+ x^{4} \right)^{2^i} = u_1 \left( \sideset{}{^F} \sum_{i \geq 0} t_i x^{2^i} \right)^2 \underset{F}+ \left( \sideset{}{^F} \sum_{i \geq 0} t_i x^{2^i} \right)^{4}. \end{equation}
We will study the coefficients in this equation up to that of $x^{32}$ for elements $g \in \SS$ of the form $g= 1+g_2 S^2$ modulo $(S^3)$. Note that $t_1 = g_1$ modulo $(2,u_1)$ and since $g_1=0$, we have $t_1 = 0$ modulo $(2,u_1)$. We also note that, modulo $(2,u_1)$, $F(x,y)$ is equivalent to the Honda formal group law whose coefficients are in $\FF_2$. So, 
\[F(x,y)^{2} = F(x^2,y^2) \mod (2,u_1).\]

\begin{prop}
Let $g = 1 + g_2 S^2 + g_3 S^3 + g_4 S^4 + \ldots$. Then 
\begin{enumerate}[(a)]
\item $t_3 = g_3+g_4^2 u_1$ modulo $(2,u_1^2)$,
\item $t_2 = g_2+ g_3^2u_1+g_1 u_1^2+(g_4   +g_2^2  +g_2^2 ) u_1^3$ modulo $(2,u_1^4)$
\item $t_1 = g_2^2 u_1+g_3 u_1^3+g_3^2 u_1^5+g_3 u_1^6 + ( g_2+g_2^3 +g_4+g_4^2) u_1^7$ modulo $(2,u_1^8)$
\item $t_0 =1+(g_2 + g_2^2) u_1^3+g_3 u_1^5 +g_3 u_1^8  + (g_4+ g_4^2) u_1^9$ modulo $(2,u_1^{10})$.
\end{enumerate}
\end{prop}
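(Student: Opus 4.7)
The plan is to extract the four formulas from the defining relation \eqref{eq:theonep2} by comparing coefficients of $x^{2^k}$ for $k=2,3,4,5$, using a bootstrap in which improved approximations for the $t_i$ with small $i$ feed into the equations that pin down the $t_i$ with large $i$, and conversely. Since $g_0=1$ and $g_1=0$ here, Corollary~\ref{cor:iniformp2} already gives the starting approximations
\begin{align*}
t_0 &\equiv 1+(g_2+g_2^2)u_1^3 \mod (2,u_1^5),\\
t_1 &\equiv g_2^2 u_1 \mod (2,u_1^3),
\end{align*}
and trivially $t_i\equiv g_i\mod (2,u_1)$ for all $i$. These are too weak to give the stated conclusion, but they provide enough data to enter the recursion at the next level.

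First, I would expand both sides of \eqref{eq:theonep2} as polynomials in $x$ using the explicit formula for $F(x,y)$ produced in \fullref{thm:Fform} (and its computer extension to terms of degree less than $16$, justified via \fullref{lem:Fmod16}). The right-hand side is $u_1\bigl(\sum^F t_i x^{2^i}\bigr)^2 +_F \bigl(\sum^F t_i x^{2^i}\bigr)^4$; the left-hand side, after substituting $g_\ast(u_1)=t_0 u_1\mod (2)$ from part (a) of \fullref{cor:iniformp2}, becomes $\sum^F t_i(t_0 u_1 x^2+_{g_\ast F} x^4)^{2^i}$. Comparing the coefficient of $x^4$ recovers part (b) of \fullref{cor:iniformp2}, and comparing the coefficient of $x^8$ gives a relation controlling $t_1$ in terms of $t_0,t_2$. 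With the sharpened value $t_0\equiv 1+(g_2+g_2^2)u_1^3\mod (2,u_1^5)$ and $t_2\equiv g_2 \mod (2,u_1)$, this coefficient comparison yields $t_1$ modulo a higher power of $u_1$ than is available from \fullref{cor:iniformp2}.

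The heart of the argument is then to iterate. Comparing the coefficient of $x^{16}$ in \eqref{eq:theonep2} and substituting the current best approximations for $t_0,t_1,t_3$ determines $t_2$ modulo $(2,u_1^4)$ (part (b)); then re-substituting the improved $t_2$ into the $x^8$-coefficient equation sharpens $t_1$ to the precision claimed in part (c); this sharpened $t_1$ fed back into the $x^4$-coefficient equation (part (b) of \fullref{cor:iniformp2}) sharpens $t_0$ to the claimed precision in part (d). Part (a) is the analogous computation from the coefficient of $x^{32}$, which requires only the leading-order values of $t_0,t_1,t_2,t_4$ because every other term that could contribute is absorbed into $u_1^2$ or higher, so that $t_3$ is determined modulo $(2,u_1^2)$.

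The main obstacle is purely bookkeeping: the formal group law $F$ at the prime $2$ has very dense expansion, and several of the $+_F$ sums on each side of \eqref{eq:theonep2} must be computed to relatively high total degree (up to $x^{32}$) while also tracking powers of $u_1$ at least up to $u_1^{10}$. Almost every term produced by the expansion contributes to some coefficient we care about, and because $g_2^4=g_2$, cross-terms such as $g_2^3$ or $g_4+g_4^2$ appear only after several simplifications. For this reason I would carry the computation out symbolically by machine, using \fullref{lem:Fmod16} to certify that the truncation of $F$ we use is sufficient for every substitution of the shape $F(t_0 u_1 x^2, x^4)^{2^i}$ that arises. The output of each coefficient comparison is a closed-form expression whose reduction modulo $(2,u_1^{\mathrm{target}})$ matches the claimed formulas, and verifying this reduction by hand from the machine output is straightforward term-by-term.
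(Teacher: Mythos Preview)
Your proposal is correct and follows essentially the same route as the paper: extract recursive relations for $t_3,t_2,t_1,t_0$ by comparing the coefficients of $x^{32},x^{16},x^{8},x^{4}$ in \eqref{eq:theonep2}, using \fullref{lem:Fmod16} to justify the truncation of $F$, and then bootstrap the resulting relations together with the starting data from \fullref{cor:iniformp2} (with the computer doing the bookkeeping). The paper organizes the recursion in the order (a)$\to$(b)$\to$(c)$\to$(d), but the logic and the computations are identical to what you describe.
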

\begin{proof}[Computer Assisted Proof.]
For (a), we compute the coefficients of $x^{32}$ modulo $(2,u_1^2)$ in \eqref{eq:theonep2}. For this, we note using the above observations that \eqref{eq:theonep2} reduces to the following relation modulo $(u_1^2,x^{33})$:
\begin{align*}
t_0(t_0 u_1 x^2+_F x^{4}) &+_F t_1 x^{8}  +_F t_2 x^{16} + t_3  x^{32}  \\
 \ \ \ &= u_1(t_0^2x^2+_F t_2^2x^8+_F t_3^2 x^{16} + t_4^2 x^{32})+_F (t_0x +_F t_2x^4+ t_3 x^8 )^4
\end{align*}
By \fullref{lem:Fmod16}, both sides are determined modulo $(x^{34})$ by $F(x,y)$ modulo $(x,y)^{16}$. A direct computation comparing both sides gives
\[ t_3 = t_3^4+t_4^2 u_1  \mod (2,u_1^2).\]
Since $t_i = g_i $ modulo $(2,u_1)$, we get (a).

To get (b) we compute the coefficients of $x^{16}$ modulo $(2,u_1^4)$ in \eqref{eq:theonep2}. Modulo $(2, u_1^4, x^{17})$, we have
\begin{align*}
t_0(t_0 u_1 x^2 \underset{g_*F}+ x^{4}) &+_F t_1(t_0 u_1 x^2 \underset{g_*F}+ x^{4})^2  + t_2x^{16}  \\
 \ \ \ \  &= u_1 \left(  t_0 x +_F t_1x^2+_F t_2x^4 + t_3 x^8 \right)^2 \underset{F}+ \left( t_0 x +_F t_1x^2+ t_2x^4 \right)^{4}. 
\end{align*}
A direct computation comparing both sides gives the relation
\[t_2 = t_2^4+ t_3^2 u_1 +t_1 t_0^2 u_1^2+t_1^4 t_2^2 u_1^2 +t_0^{16} u_1^3+t_2^2 t_0^8 u_1^3+t_1^6 t_0^4 u_1^3+t_0^4 u_1^3 \mod (2,u_1^4).\]
To get the result, we combine this with the fact that $t_i = g_i$ modulo $(2,u_1)$, with (a) and with \fullref{cor:iniformp2}.

To get (c), we compute the coefficient of $x^{8}$ modulo $(2,u_1^8)$ in \eqref{eq:theonep2}.
Modulo $(2, u_1^8, x^{9})$, we have
\begin{align*}
t_0(t_0 u_1 x^2  \underset{g_*F}+  x^{4}) &+_F t_1(t_0 u_1 x^2  \underset{g_*F}+  x^{4})^2  + t_2 t_0^4 u_1^4 x^8   \\ 
 \ \ \ \  &=  u_1 \left(  t_0 x +_F t_1x^2+ t_2x^4 \right)^2 \underset{F}+ \left( t_0 x + t_1x^2 \right)^{4}. 
\end{align*}
A direct computation comparing both sides gives 
\begin{align}\label{eq:bigrelt1}
t_1 = t_1^4+  t_0^8 u_1^4+t_1 t_0^6 u_1^6+t_0^5 u_1^4+t_1^2 t_0^4 u_1^5+t_2 t_0^4 u_1^4+t_1^2 t_0^4 u_1^2+t_1 t_0^3 u_1^3+t_2^2 u_1 \mod (2,u_1^8).
\end{align}
Now, we do a short recursion. First, we use (a), (b) and \fullref{cor:iniformp2} to compute that
\begin{align*}
t_1 &= g_2^2 u_1+g_3 u_1^3 + g_3^2 u_1^5  \mod (2,u_1^6) \\
t_0 &= 1+ (g_2 + g_2^2) u_1^3 \mod (2,u_1^5) .
\end{align*}
We use this again in part (b) of \fullref{cor:iniformp2} and in \eqref{eq:bigrelt1} to finish the proof.
\end{proof}


\end{document}